\theoremstyle{plain}
\theoremstyle{definition}
\theoremstyle{plain}
\newtheorem*{theoremUkr*}{Theorem}
\newtheorem*{lemmaUkr*}{Lemma}
\newtheorem*{propositionUkr*}{Proposition}
\newtheorem*{statementUkr*}{Statement}
\newtheorem*{corollaryUkr*}{Corollary}
\theoremstyle{definition}
\newtheorem*{definitionUkr*}{Definition}
\theoremstyle{remark}
\newtheorem*{notationUkr*}{Notation}
\newtheorem*{remarkUkr*}{Remark}
\theoremstyle{plain}
\newtheorem{theoremEng}{Theorem}
\newtheorem{lemmaEng}[theoremEng]{Lemma}
\newtheorem{statementEng}[theoremEng]{Statement}
\newtheorem{corollaryEng}[theoremEng]{Corollary}
\theoremstyle{definition}
\newtheorem{remarkEng}[theoremEng]{Remark}
\numberwithin{theoremEng}{section}
\numberwithin{equation}{section}
\title[Uniqueness problem for accretive Schr\"{o}dinger operators]{Uniqueness problem for accretive Schr\"{o}dinger operators with complex singular coefficients}
\author[V. Mikhailets]{Vladimir Mikhailets}
\address{Vladimir Mikhailets
         \\ \orcidlinkf{0000-0002-1332-1562}
         }
\address{Department of Mathematics \\
         King's College London \\
         2CR2LS \\
         United Kingdom}
\address{Institute of Mathematics of NAS of Ukraine \\
         3, Tereschenkivska Str. \\
         Kyiv-4 \\
         Ukraine \\
         01024}
\email[Vladimir Mikhailets]{mikhailets@imath.kiev.ua, vladimir.mikhailets@gmail.com}
\author[V. Molyboga]{Volodymyr Molyboga}
\address{Volodymyr Molyboga
        \\ \orcidlinkf{0000-0002-5683-5694}
         }
\address{Institute of Mathematics of NAS of Ukraine \\
         3, Tereschenkivska Str. \\
         Kyiv-4 \\
         Ukraine \\
         01024}
\email[Volodymyr Molyboga]{molyboga@imath.kiev.ua, vm.imath@gmail.com}
\keywords{Hilbert space, Schr\"{o}dinger operator, accretive operator, singular coefficients, quasi-differential operator, uniqueness problem}
\subjclass[2020]{34B20, 34B24, 34L40}
\begin{document}

\begin{abstract}
This paper studies the uniqueness problem for the one-dimensional Schr\"{o}dinger operator associated with the formal differential expression
\begin{equation*}
l[u] =-u''+qu + i[(ru)'+ru'],
\end{equation*}
in the complex Hilbert space $L^{2}(\mathbb{R})$. The coefficients of the expression are complex-valued and satisfy
\begin{equation*}
q=s+Q', \quad  s \in L^1_{loc}\left(\mathbb{R}\right) \quad\text{and}\quad Q, r \in L^2_{loc}\left(\mathbb{R}\right),
\end{equation*}
where the derivative is understood in the sense of distributions. In particular, the potential $q$ can be a Radon measure on the line. With the help of specially selected quasi-derivatives, the expression $l$ is treated as a quasi-differential expression. The domains of the minimal $\mathrm{L}_{0}$ and maximal $\mathrm{L}$ operators associated with the expression $l$ in the space $L^{2}(\mathbb{R})$ are described. We find constructive conditions on the behaviour of $\mathrm{Im}\,r$ near $\pm \infty$ that guarantee that $\mathrm{L}_{0}=\mathrm{L}$ if the operator $\mathrm{L}_{0}$ is accretive.

We prove that these conditions are sharp even in the class of differential operators with smooth real-valued coefficients.
Examples are given to illustrate the main results of the paper.

\end{abstract}


\maketitle

\section{Introduction}\label{sec:Intro}
Let $\mathrm{H}$ be a complex Hilbert space with scalar product $(\cdot\,,\cdot)$. A linear operator $\mathrm{A}$ in the space $\mathrm{H}$ is called accretive \cite{Kato1995, Schm2012} if
\begin{equation*}
  \mathrm{Re}\,(\mathrm{A}x,x) \geqslant 0, \qquad \forall x\in \mathrm{Dom}(\mathrm{A}),
\end{equation*}
and maximal accretive if it has no nontrivial accretive extensions in the Hilbert space $\mathrm{H}$. An operator $\mathrm{A}$ is called $m$-accretive if the left complex half-plane belongs to the resolvent set $\rho(\mathrm{A})$ of $\mathrm{A}$. Such operators play an important role in the theory of semigroups. The condition that the strongly continuous operator semigroup $e^{-\mathrm{A}t}$, $t\geq 0$, is contractive is equivalent to the $m$-accretivity of the operator $\mathrm{A}$. If, in addition, the operator is also sectorial, then this semigroup is holomorphic.  The class of $m$-accretive operators is invariant with respect to the mapping $\mathrm{A} \mapsto \mathrm{A}^*$  \cite[Problem~V-3.31]{Kato1995}.

It follows from Phillips' theorem \cite[Theorem~1.1.1]{Phl1957} that a maximal accretive operator is either $m$-accretive or nondensely defined and nonclosable. Therefore, every densely defined accretive operator has a closure. A densely defined operator $\mathrm{A}$ in $\mathrm{H}$ is symmetric (self-adjoint) if and only if the operators $i\mathrm{A}$ and $-i\mathrm{A}$ are accretive ($m$-accretive). Thus, the problem of the $m$-accretivity of a linear operator encompasses the problem of the self-adjointness of a symmetric operator.

The problem of self-adjointness of symmetric Schr\"{o}dinger operators in Hilbert spaces $L^{2}(\mathbb{R}^{n})$, $n\in \mathbb{N}$, is inspired by problems of quantum mechanics. It has a long history and a huge bibliography (see monographs \cite{ReedSimon-book02_Eng_1975, AlbGszHKH2005, Zttl2005, Zttl2021, GszNclZnch2024} and works \cite{MkhSbl1999, Shbn2001, AlKsMl2010, HrMk2012, EcGsNcTs2013, KsMlNc2022} and the references therein). The obtained results are, in many cases, complete and form the basis of the spectral analysis of such operators and scattering theory. In mathematical models of real physical processes in strongly inhomogeneous media, differential operators with strongly singular coefficients naturally arise. As a rule, they can be interpreted as distributions from certain classes.

The study of properties of such operators is a complex mathematical problem due to difficulties of a fundamental character. They naturally arise even in the problem of the proper definition of such operators. Such a definition needs a description of functions on which the formal differential expression is given, and also the mapping defined by the expression.

In this paper, we introduce and study the class of Schr\"{o}dinger operators in the complex Hilbert space $L^{2}(\mathbb{R})$, which are associated with the formal differential expression
\begin{equation*}\label{fde_Shr_CMN}
  l[u]:=-u''+au'+bu,  \tag{$\ast$}
\end{equation*}
with complex-valued coefficients $a$ and $b$ satisfying the following conditions:
\begin{equation*}\label{eq_cond}
  b=s+B', \qquad s\in L_{loc}^{1}(\mathbb{R}) \quad  B, a \in L_{loc}^{2}(\mathbb{R}), 
\end{equation*}
with the derivative being understood in the sense of distributions.

Using the substitutions
\begin{equation*}\label{eq_CHNG}
  r:=\frac{1}{2i}a, \qquad Q:=B-\frac{1}{2}a,
\end{equation*}
the formal differential expression $(\ast)$ can be written in a more convenient form
\begin{equation}\label{fde_Shr}
l[u] =-u''+qu + i[(ru)'+ru'],
\end{equation}
where the complex-valued coefficients $q$ and $r$ satisfy the conditions
\begin{equation}\label{cond_Main}
q=s+Q', \quad  s \in L^1_{loc}\left(\mathbb{R}\right) \quad\text{and}\quad Q, r \in L^2_{loc}\left(\mathbb{R}\right).
\end{equation}
From now on, we assume without loss of generality that the formal differential expression takes the form~\eqref{fde_Shr}, and its coefficients satisfy conditions~\eqref{cond_Main}.

The article is structured as follows. In Section \ref{sec_ShrOp_mAccretivity} we establish our main uniqueness result (Theorem \ref{th_MAccretivityMAIN_A}), which contains sufficient conditions for the $m$-accretivity of minimal operators for $l$ and $l^+$, provided that the preminimal operators in the space $L^2(\mathbb{R})$ are accretive. To achieve this, in Section \ref{sec:SO} we develop a regularization technique using Shin--Zettl quasi-derivatives. We describe the maximal, pre-minimal and minimal operators associated with expressions $l$ and $l^+$ in $L^2(\mathbb{R})$.
Section~\ref{sec:IntTpTh} contains Theorem~\ref{th_MAccretivityMAIN_B}, which gives other conditions for the $m$-accretivity of minimal operators. These conditions are imposed only on the behaviour of $\mathrm{Im\,}r$ on some sequences of finite intervals that tend to $\pm \infty$ as $n\rightarrow\pm\infty$. 
Appendices A and B contain supplementary results on accretive differential operators with regular or smooth coefficients. Theorem \ref{th4} provides necessary and sufficient conditions for the accretivity of such operators. Theorem \ref{th5} describes a wide class of accretive Schr\"odinger operators with real-valued coefficients that fail to be $m$-accretive. Theorem \ref{th_ApendixB} shows that the sufficient conditions for $m$-accretivity in Theorem \ref{th_MAccretivityMAIN_A} are sharp, even in the case of smooth real-valued coefficients. Examples 1 and 2 illustrate the main results of the paper.

\section{1D Schr\"{o}dinger operators with singular coefficients}\label{sec:SO}
Let us consider the differential expression \eqref{fde_Shr} with complex-valued coefficients $q$ and $r$ on the line $\mathbb{R}$. If $q\in L_{loc}^{2}(\mathbb{R})$ and $r\in W_{2,loc}^{1}(\mathbb{R})$, then the differential expression $l$ is well defined on the set $C_{comp}^{\infty}(\mathbb{R})$ of infinitely smooth compactly supported functions on $\mathbb{R}$ (this set is dense in $L^{2}(\mathbb{R})$). Therefore, we associate the pre-minimal operator in the Hilbert space $L^{2}(\mathbb{R})$ with expression \eqref{fde_Shr}:
\begin{equation*}
  \mathrm{L}_{00}: u \rightarrow l[u], \qquad \mathrm{Dom}(\mathrm{L}_{00})=C_{comp}^{\infty}(\mathbb{R}),
\end{equation*}
and the maximal operator
\begin{equation*}
  \mathrm{L}: u \rightarrow l[u], \qquad \mathrm{Dom}(\mathrm{L})= \left\{u\in L^2(\mathbb{R})\cap W_{2,loc}^{2}(\mathbb{R})\,\left|\,l[u]\in   L^{2}(\mathbb{R})\right.\right\}.
\end{equation*}
It is clear that $\mathrm{L}_{00}\subset L$. The operator $\mathrm{L}$ is closed in the space $L^{2}(\mathbb{R})$ \cite{DnfSchw_II_1963}, therefore, the operator $\mathrm{L}_{00}$ is closable. Its closure $\tilde{\mathrm{L}}_{00}:=\mathrm{L}_{0}$ is called the minimal operator associated with expression \eqref{fde_Shr}. The formally adjoint differential expression to $l$ becomes
\begin{equation*}\label{fde_AShr}
l^{+}[v] =-v''+\overline{q}v + i[(\overline{r}v)'+\overline{r}v'],
\end{equation*}
where the overline denotes complex conjugation of the coefficients. Therefore, if the coefficients $q$ and $r$ in \eqref{fde_Shr} are real-valued, then $l=l^{+}$ and the operators $\mathrm{L}_{00}$ and $\mathrm{L}_{0}$ are symmetric. As is known, in this case (see \cite{NmrkRus1969, Schm2012})
\begin{equation*}
\mathrm{L}=\mathrm{L}_{0}^{\ast}=\mathrm{L}_{00}^{\ast}.
\end{equation*}
Similarly, we define the operators $\mathrm{L}_{00}^{+}$, $\mathrm{L}_{0}^{+}$ and $\mathrm{L}^{+}$ generated by the differential expression $l^{+}$ in the space $L^{2}(\mathbb{R})$.

The operators $\mathrm{L}_{00}$, $\mathrm{L}_{0}$, and $\mathrm{L}$ can also be defined in the case where $q\in L_{loc}^{1}(\mathbb{R})$ and $r\in \mathrm{AC}_{loc}(\mathbb{R})$. They act on functions as the differential expression $l$ \cite{NmrkRus1969, Wei1987, GrnMkhPnk2013}.

In this paper, we investigate the case when the coefficients of the expression $l$ are complex-valued and satisfy~\eqref{cond_Main}. In particular, if the function $Q$ has locally bounded variation, then the coefficient $q$ is a complex-valued Radon measure on a locally compact space $\mathbb{R}$. Schr\"{o}dinger operators with such potentials appear in many problems of modern mathematical physics (see, for example, \cite{AlbGszHKH2005, AlKsMl2010, EcGsNcTs2013, MkhMlb2018} and the bibliography therein). In this case, first of all, we should study the question about a reasonable definition of the operators generated by the formal differential expression $l$. The most natural, in our opinion, is the definition based on the regularization of the singular differential expression with the help of Shin--Zettl quasi-derivatives \cite{Shin1943, Zttl1975, EvMr1999}. The most extensively studied case is when  $r\equiv 0$ and $q=\overline{q}$, see \cite{AlKsMl2010, HrMk2012, MkhMlb2013, EcGsNcTs2013} and references therein. The more general case is considered in \cite{MkhGrnMlb2022}. In this case, the operator $\mathrm{L}_{0}=\mathrm{L}_{0}^{+}$ is symmetric and $\mathrm{L}=\mathrm{L}_{0}^{\ast}=(\mathrm{L}_{0}^{+})^{\ast}$.

In this paper, we investigate the general case of singular complex-valued coefficients based on the regularization of the formal differential expression $l$ using quasi-derivatives.


Define
\begin{equation*}
  G_{1}:=Q+i\,r \quad\text{and}\quad G_{2}:=Q-i\,r.
\end{equation*}
Then expression~\eqref{fde_Shr} can be presented in the more convenient form
\begin{equation}\label{fde_Shr_Work}
  l[u]=-u''+(G_{1}u)'-G_{2}u'+su
\end{equation}
since
\begin{equation*}
  (G_{1}u)'-G_{2}u'=Q'u+i\left[(ru)'+ru' \right].
\end{equation*}

Under conditions \eqref{cond_Main}, the Shin--Zettl matrix function is well defined $A(\cdot)\in L^1_{loc}$:
\begin{equation}\label{SZ_matrix}
A(\cdot)=
\begin{pmatrix}
G_{1} & 1 \\
-G_{1}G_{2} + s & -G_{2}
\end{pmatrix}
.
\end{equation}
The Shin-Zettl matrix for $l$ defines the quasi-derivatives as follows \cite{Shin1943, Zttl1975, EvMr1999}:
\begin{align}\label{quasiderivatives}
u^{[0]}:=u,\qquad
u^{[1]}:=u' - G_{1}u, \qquad
u^{[2]}:=\left(u^{[1]}\right)'+G_{2}u^{[1]}+\left(G_{1}G_{2} - s\right)u.
\end{align}
Using quasi-derivatives \eqref{quasiderivatives}, we properly define the formal Schr\"{o}dinger differential expression \eqref{fde_Shr} as a quasi-differential one by the formulas:
\begin{equation*}
l[u]:=-u^{[2]}\quad\text{with}\quad \mathrm{Dom}(l):=\left\{u:\mathbb{R}\rightarrow \mathbb{C}\left|\,u,\,u^{[1]}\in \mathrm{AC}_{loc}(\mathbb{R})\right.\right\}.
\end{equation*}

Now, in the complex Hilbert space $L^{2}(\mathbb{R})$, we define the Schr\"{o}dinger operator $\mathrm{L}$ generated by formal differential expression \eqref{fde_Shr} as a maximal quasi-differential operator
\begin{equation*}\label{maxOP}
  \mathrm{L}u:=l[u],\qquad
  \mathrm{Dom}(\mathrm{L}):= \left\{u\in L^2(\mathbb{R})\,\left|\,u,\,u^{[1]}\in \mathrm{AC}_{loc}(\mathbb{R}),l[u]\in   L^{2}(\mathbb{R})\right.\right\}.
\end{equation*}
The restriction of the maximal operator $\mathrm{L}$ to compactly supported functions generates a pre-minimal operator:
\begin{equation*}\label{preminOP}
  \mathrm{L}_{00}u:=\mathrm{L}u, \qquad
  \mathrm{Dom}(\mathrm{L}_{00}):=\left\{u\in \mathrm{Dom}(\mathrm{L})\,  \left|\,\mathrm{supp}\,u\Subset\mathbb{R}\right.\right\}.
\end{equation*}

Shin-Zettl matrix \eqref{SZ_matrix} defines the formally Lagrange adjoint quasi-differential expression $l^{+}$ as follows \cite{Shin1943, Zttl1975, EvMr1999}:
\begin{equation}\label{quasiderivatives+}
  v^{\{0\}}:=v, \qquad
  v^{\{1\}}:=v'-\overline{G_{2}}v, \qquad
  v^{\{2\}}:=\left(v^{\{1\}}\right)'+\overline{G_{1}}v^{\{1\}}+\left(\overline{G_{1}G_{2}}-\overline{s}\right)v,
\end{equation}
and
\begin{equation*}
    l^{+}[v]:=-v^{\{2\}}, \qquad
    \mathrm{Dom}(l^{+}):=\left\{v:\mathbb{R}\rightarrow \mathbb{C}\left|v,\,v^{\{1\}}\in\mathrm{AC}_{loc}(\mathbb{R})\right.\right\}.
\end{equation*}
The quasi-differential expression $l^{+}$ generates the following maximal and pre-minimal operators:
\begin{equation*}
    \mathrm{L}^{+}v:=l[v],\qquad
  \mathrm{Dom}(\mathrm{L}^{+}):=\left\{v\in L^{2}(\mathbb{R})\,\left|\,v,\,v^{\{1\}}\in \mathrm{AC}_{loc}(\mathbb{R}),l^{+}[v]\in L^{2}(\mathbb{R})\right.\right\},
\end{equation*}
and
\begin{equation*}
    \mathrm{L}_{00}^{+}v:=l[v], \qquad
    \mathrm{Dom}(\mathrm{L}_{00}^{+}):=\left\{v\in \mathrm{Dom}(\mathrm{L}^{+})\,\left|\,\mathrm{supp}\,v\Subset\mathbb{R}\right.\right\}.
\end{equation*}

Let us introduce the notation
\begin{align*}
  [u,v](x) & :=u(x)\overline{v^{\{1\}}(x)}-u^{[1]}(x)\overline{v(x)}, \\
  [u,v]_{a}^{b} & :=[u,v](b)-[u,v](a),\quad -\infty\leq a< b\leq \infty.
\end{align*}

The main result of this section is the following theorem.

\begin{theoremEng}\label{th_Properties}
The following statements hold:
\begin{itemize}
 \item [$1^{0}$.] The pre-minimal operators $\mathrm{L}_{00}$ and $\mathrm{L}_{00}^{+}$ are densely defined in the Hilbert space $L^{2}(\mathbb{R})$.
\item [$2^{0}$.] $\left(\mathrm{L}_{00}\right)^{\ast}=\mathrm{L}^{+}\quad\text{and}\quad \left(\mathrm{L}_{00}^{+}\right)^{\ast}=\mathrm{L}.$
\item [$3^{0}$.] The operators $\mathrm{L}$, $\mathrm{L}^{+}$ are closed, and the pre-minimal operators $\mathrm{L}_{00}$, $\mathrm{L}_{00}^{+}$ admit a closure.
\item [$4^{0}$.] Domains of the minimal operators $\mathrm{L}_{0}$, $\mathrm{L}_{0}^{+}$ allow the following descriptions:
\begin{equation*}
  \mathrm{Dom}(\mathrm{L}_{0})  =\left\{u\in \mathrm{Dom}(\mathrm{L}) \left|\,[u,v]_{-\infty}^{\infty}=0\quad \forall
v\in  \mathrm{Dom}(\mathrm{L}^{+})\right.\right\}
\end{equation*}
and
\begin{equation*}
  \mathrm{Dom}(\mathrm{L}_{0}^{+})  =\left\{v\in \mathrm{Dom}(\mathrm{L}^{+}) \left|\,[u,v]_{-\infty}^{\infty}=0\quad
\forall u\in  \mathrm{Dom}(\mathrm{L})\right.\right\},
\end{equation*}
with
\begin{align*}
  i)\hspace{5pt} & \mathrm{Dom}(\mathrm{L})\subset W_{2,loc}^{1}(\mathbb{R}) \quad \text{and} \quad \mathrm{Dom}(\mathrm{L^{+}})\subset W_{2,loc}^{1}(\mathbb{R}), \\
  ii)\hspace{5pt} & \mathrm{Dom}(\mathrm{L_{00}})\subset W_{2,comp}^{1}(\mathbb{R}) \quad \text{and} \quad \mathrm{Dom}(\mathrm{L_{00}^{+}})\subset W_{2,comp}^{1}(\mathbb{R}). \hspace{90pt}
\end{align*}
 \item [$5^{0}$.] The quadratic form associated with the pre-minimal operator is expressed as follows
\begin{equation*}
 (\mathrm{L}_{00}u,u)_{L^2(\mathbb{R})}=\int_{\mathbb{R}}|u'|^{2}d\,x- \int_{\mathbb{R}}\left(G_{1}u\overline{u}'+G_{2}u'\overline{u}\right)d\,x+\int_{\mathbb{R}}s|u|^{2}d\,x, \qquad
 u\in \mathrm{Dom}(\mathrm{L}_{00}).
\end{equation*}
\end{itemize}
\end{theoremEng}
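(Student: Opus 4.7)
The plan is to derive all five statements from the Lagrange (Green) identity for the Shin--Zettl pair $(l,l^{+})$. I first verify by direct computation, using \eqref{quasiderivatives} and \eqref{quasiderivatives+}, the pointwise equality
\[
l[u]\,\overline{v}\;-\;u\,\overline{l^{+}[v]}\;=\;\frac{d}{dx}[u,v](x)
\]
as an equality of locally integrable functions whenever $u,u^{[1]},v,v^{\{1\}}\in \mathrm{AC}_{loc}(\mathbb{R})$. Integration over $[a,b]$ produces
\[
\int_{a}^{b}\bigl(l[u]\,\overline{v}-u\,\overline{l^{+}[v]}\bigr)\,dx \;=\; [u,v]_{a}^{b},
\]
and this identity drives the rest of the proof.

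I would dispose of the easier claims first. The regularity assertions $4^{0}\,i)$, $ii)$ fall out of $u^{[1]}=u'-G_{1}u$: since $u\in\mathrm{AC}_{loc}$ is locally bounded, $G_{1}\in L^{2}_{loc}$, and $u^{[1]}\in\mathrm{AC}_{loc}$, the relation $u'=u^{[1]}+G_{1}u$ places $u'\in L^{2}_{loc}$, with $\mathrm{supp}\,u'\subset \mathrm{supp}\,u$ for the compact variant; the dual identity $v^{\{1\}}=v'-\overline{G_{2}}v$ handles $\mathrm{L}^{+}$. For the quadratic form $5^{0}$, I expand $u^{[2]}$ via \eqref{quasiderivatives}, pair with $\overline{u}$, and integrate by parts on an interval containing $\mathrm{supp}\,u$; the just-established $W^{1}_{2,loc}$-regularity justifies integration by parts, the compact support kills the boundary terms, and the two occurrences of $G_{1}G_{2}|u|^{2}$ cancel, leaving the stated formula.

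For density ($1^{0}$) I would exploit the Shin--Zettl solvability theory for the system associated with \eqref{SZ_matrix}, whose local integrability $A\in L^{1}_{loc}$ yields existence and uniqueness of solutions to the Cauchy problem with arbitrary initial data $(u(x_{0}),u^{[1]}(x_{0}))$. Fixing a bounded interval $[\alpha,\beta]$, variation of parameters gives the unique solution of $l[u]=h$ with $u(\alpha)=u^{[1]}(\alpha)=0$; requiring in addition $u(\beta)=u^{[1]}(\beta)=0$ imposes two linear constraints on $h$, cutting out a codimension-two subspace of $L^{2}([\alpha,\beta])$. For $h$ in this subspace the resulting $u$ extends by $0$ to an element of $\mathrm{Dom}(\mathrm{L}_{00})$, and letting $[\alpha,\beta]$ exhaust $\mathbb{R}$ yields a family dense in $L^{2}(\mathbb{R})$ by a standard orthogonality argument; the symmetric construction handles $\mathrm{L}_{00}^{+}$. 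Claim $2^{0}$ then begins from the easy inclusion $\mathrm{L}^{+}\subset(\mathrm{L}_{00})^{\ast}$, immediate from the Lagrange identity since $[u,v]_{a}^{b}=0$ whenever $[a,b]\supset \mathrm{supp}\,u$.

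The main obstacle will be the reverse inclusion $(\mathrm{L}_{00})^{\ast}\subset \mathrm{L}^{+}$. Given $v\in\mathrm{Dom}((\mathrm{L}_{00})^{\ast})$ with $w:=(\mathrm{L}_{00})^{\ast}v\in L^{2}(\mathbb{R})$, I must recover the quasi-regularity $v,v^{\{1\}}\in\mathrm{AC}_{loc}(\mathbb{R})$ and the pointwise equation $l^{+}[v]=w$ from the single distributional identity $(\mathrm{L}_{00}u,v)=(u,w)$. My approach is to test against the rich family of compactly supported $u$ produced by the density construction: fixing $x_{0}$, I would introduce a candidate $\tilde{v}^{\{1\}}$ as an explicit primitive built from $v$ and $w$ through the adjoint Shin--Zettl formulas \eqref{quasiderivatives+}, and then show that varying $u$ forces $v$ and $\tilde{v}^{\{1\}}$ to satisfy the required regularity and the equation almost everywhere. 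Claim $3^{0}$ is then immediate: $\mathrm{L}^{+}=(\mathrm{L}_{00})^{\ast}$ and $\mathrm{L}=(\mathrm{L}_{00}^{+})^{\ast}$ are closed as adjoints, while $\mathrm{L}_{00},\mathrm{L}_{00}^{+}$ admit closures since their adjoints contain the densely defined operators from $1^{0}$. Finally, for $4^{0}$, the identity $\mathrm{L}_{0}=(\mathrm{L}_{00})^{\ast\ast}=(\mathrm{L}^{+})^{\ast}$ together with $\mathrm{L}_{0}\subset\mathrm{L}$ and the Lagrange identity characterizes $\mathrm{Dom}(\mathrm{L}_{0})$ as those $u\in\mathrm{Dom}(\mathrm{L})$ for which $(\mathrm{L}u,v)=(u,\mathrm{L}^{+}v)$ for every $v\in\mathrm{Dom}(\mathrm{L}^{+})$; absolute convergence of $\int_{\mathbb{R}}l[u]\overline{v}\,dx$ and $\int_{\mathbb{R}}u\,\overline{l^{+}[v]}\,dx$ by Cauchy--Schwarz forces $[u,v](\pm N)$ to converge as $N\to\infty$, and the Lagrange identity then turns the condition into $[u,v]_{-\infty}^{\infty}=0$; the statement for $\mathrm{L}_{0}^{+}$ is symmetric.
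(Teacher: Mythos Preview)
Your overall architecture is correct and matches the paper's: establish the Lagrange identity, deduce the easy inclusion $\mathrm{L}^{+}\subset(\mathrm{L}_{00})^{\ast}$, prove density, prove the reverse inclusion, and then read off $3^{0}$, $4^{0}$, $5^{0}$ as consequences. Your treatments of the regularity claims in $4^{0}$, of $5^{0}$, of $3^{0}$, and of the boundary-form characterization in $4^{0}$ are essentially the same as the paper's.

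The paper diverges from you on $1^{0}$ and on the hard inclusion in $2^{0}$: rather than building solutions directly from the Cauchy problem and a hand-crafted primitive $\tilde v^{\{1\}}$, it \emph{localizes} to a finite interval $\Delta$ and invokes Zettl's finite-interval theory (Statement~\ref{st_QDO_PropertiesFI}), which gives both the density of $\mathrm{Dom}(\mathrm{L}_{0\Delta})$ in $L^{2}(\Delta)$ and the identity $(\mathrm{L}_{0\Delta})^{\ast}=\mathrm{L}_{\Delta}^{+}$. Density on $\mathbb{R}$ then follows because $\mathrm{Dom}(\mathrm{L}_{0\Delta})$, extended by zero, sits inside $\mathrm{Dom}(\mathrm{L}_{00})$; and for $2^{0}$, given $v\in\mathrm{Dom}((\mathrm{L}_{00})^{\ast})$, testing against this embedded copy of $\mathrm{Dom}(\mathrm{L}_{0\Delta})$ shows $v_{\Delta}\in\mathrm{Dom}((\mathrm{L}_{0\Delta})^{\ast})=\mathrm{Dom}(\mathrm{L}_{\Delta}^{+})$, which is exactly the quasi-regularity $v,v^{\{1\}}\in\mathrm{AC}(\overline{\Delta})$ together with $l^{+}[v]=((\mathrm{L}_{00})^{\ast}v)|_{\Delta}$. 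Varying $\Delta$ finishes it.

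Your route is not wrong, but be aware that the step you label ``a standard orthogonality argument'' is precisely the density of $\mathrm{Dom}(\mathrm{L}_{0\Delta})$ in $L^{2}(\Delta)$---you have produced those functions, but not yet argued they are dense---and your sketch for the reverse inclusion (``introduce a candidate $\tilde v^{\{1\}}$ \ldots\ varying $u$ forces \ldots'') is really a reproof of $(\mathrm{L}_{0\Delta})^{\ast}=\mathrm{L}_{\Delta}^{+}$ from scratch. The paper's localization buys you both of these facts as a black box from the existing Shin--Zettl literature, which makes the argument considerably shorter; your approach is more self-contained but would need the details of those two finite-interval results written out.
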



The proof of Theorem~\ref{th_Properties} relies on the following statements.
\begin{lemmaEng}[Generalized Lagrange identity {\cite[Corollary~1]{Zttl1975}}]\label{lm_GnrlLgrnIdnt_FI}
For arbitrary functions $u\in \mathrm{Dom}(\mathrm{L})$ and $v\in \mathrm{Dom}(\mathrm{L}^{+})$, the following relation holds:
\begin{equation}\label{eq_GnrlLgnrIdnt_FI}
\int_{a}^{b}l[u]\overline{v}d\,x-\int_{a}^{b}u\overline{l^{+}[v]}d\,x=[u,v]_{a}^{b}
\end{equation}
whenever $-\infty<a<b<+\infty$.
\end{lemmaEng}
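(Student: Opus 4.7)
The plan is to establish the pointwise identity
\[
\frac{d}{dx}[u,v](x) = l[u](x)\overline{v(x)} - u(x)\overline{l^{+}[v](x)} \qquad \text{a.e.\ on } (a,b),
\]
and then integrate via the fundamental theorem of calculus for locally absolutely continuous functions. Since $u, u^{[1]}, v, v^{\{1\}} \in \mathrm{AC}_{loc}(\mathbb{R})$ by the definitions of $\mathrm{Dom}(\mathrm{L})$ and $\mathrm{Dom}(\mathrm{L}^{+})$, the bracket $[u,v] = u\overline{v^{\{1\}}} - u^{[1]}\overline{v}$ is itself in $\mathrm{AC}_{loc}(\mathbb{R})$, and the Leibniz rule gives
\[
\frac{d}{dx}[u,v] = u'\,\overline{v^{\{1\}}} + u\,\overline{(v^{\{1\}})'} - (u^{[1]})'\,\overline{v} - u^{[1]}\,\overline{v'}.
\]

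Inverting the quasi-derivative formulas~\eqref{quasiderivatives} and~\eqref{quasiderivatives+} yields
\[
u' = u^{[1]} + G_{1}u, \qquad (u^{[1]})' = u^{[2]} - G_{2}u^{[1]} - (G_{1}G_{2}-s)u,
\]
and, after complex conjugation,
\[
\overline{v'} = \overline{v^{\{1\}}} + G_{2}\overline{v}, \qquad \overline{(v^{\{1\}})'} = \overline{v^{\{2\}}} - G_{1}\overline{v^{\{1\}}} - (G_{1}G_{2}-s)\overline{v}.
\]
Substituting these four expressions into the Leibniz expansion, the cross-terms $u^{[1]}\overline{v^{\{1\}}}$, $G_{1}u\overline{v^{\{1\}}}$, $G_{2}u^{[1]}\overline{v}$ and $(G_{1}G_{2}-s)u\overline{v}$ each occur twice with opposite signs and cancel in pairs, leaving
\[
\frac{d}{dx}[u,v] = u\,\overline{v^{\{2\}}} - u^{[2]}\,\overline{v} = l[u]\,\overline{v} - u\,\overline{l^{+}[v]},
\]
where the second equality uses $l[u] = -u^{[2]}$ and $l^{+}[v] = -v^{\{2\}}$. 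Integrating from $a$ to $b$ then gives~\eqref{eq_GnrlLgnrIdnt_FI}.

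The main (and essentially only) obstacle is the algebraic bookkeeping by which eight cross-terms collapse into four cancelling pairs; this is routine once the quasi-derivative relations have been inverted. Local integrability of every product appearing in the computation is automatic: the functions $u, u^{[1]}, v, v^{\{1\}}$ are locally bounded (being continuous on $\mathbb{R}$), while $G_{1}, G_{2} \in L^{2}_{loc}(\mathbb{R})$ and $s \in L^{1}_{loc}(\mathbb{R})$ by~\eqref{cond_Main}, so each product lies in $L^{1}_{loc}(\mathbb{R})$ and the fundamental theorem of calculus applies on any finite interval $[a,b]$.
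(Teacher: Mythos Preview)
Your proof is correct. The paper does not actually supply its own proof of this lemma; it simply states the result with a citation to Zettl~\cite[Corollary~1]{Zttl1975}, where the identity is established in full generality for quasi-differential expressions associated with arbitrary Shin--Zettl matrices. What you have written is precisely the specialization of Zettl's argument to the particular matrix~\eqref{SZ_matrix}: differentiate the bracket, invert the quasi-derivative relations, and watch the cross-terms cancel in pairs. The cancellation you describe is exactly what the Shin--Zettl formalism is designed to produce (the adjoint expression $l^{+}$ is built from the ``formal adjoint'' of the matrix $A$ so that this identity holds automatically), so there is nothing accidental about it. Your remarks on local integrability and the applicability of the fundamental theorem of calculus are also correct and are the only points that need checking beyond the algebra.
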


\begin{lemmaEng}\label{lm_uv_form_lim}
For arbitrary functions $u\in \mathrm{Dom}(\mathrm{L})$ and $v\in \mathrm{Dom}(\mathrm{L}^{+})$ the following finite limits exist:
\begin{equation*}\label{eq2_uv_form_lim}
[u,v](-\infty):=\lim_{x\rightarrow-\infty}[u,v](x),\qquad [u,v](\infty):=\lim_{x\rightarrow\infty}[u,v](x).
\end{equation*}
\end{lemmaEng}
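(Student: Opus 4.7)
The plan is to derive the existence of both limits directly from the generalized Lagrange identity in Lemma~\ref{lm_GnrlLgrnIdnt_FI}, combined with the fact that $u, v, l[u], l^+[v]$ all belong to $L^2(\mathbb{R})$.

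First, fix an arbitrary reference point $c\in\mathbb{R}$. Applying Lemma~\ref{lm_GnrlLgrnIdnt_FI} on $[c,b]$ with $b>c$ and on $[a,c]$ with $a<c$, I would rewrite the identity as
\begin{equation*}
[u,v](b)=[u,v](c)+\int_{c}^{b}l[u]\,\overline{v}\,dx-\int_{c}^{b}u\,\overline{l^{+}[v]}\,dx,
\end{equation*}
\begin{equation*}
[u,v](a)=[u,v](c)-\int_{a}^{c}l[u]\,\overline{v}\,dx+\int_{a}^{c}u\,\overline{l^{+}[v]}\,dx.
\end{equation*}
Thus the existence of the two one-sided limits is reduced to the existence of the corresponding improper integrals over $(c,\infty)$ and $(-\infty,c)$.

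Next, since $u\in\mathrm{Dom}(\mathrm{L})$ implies $u,\,l[u]\in L^{2}(\mathbb{R})$ and $v\in\mathrm{Dom}(\mathrm{L}^{+})$ implies $v,\,l^{+}[v]\in L^{2}(\mathbb{R})$, an application of the Cauchy--Bunyakovsky--Schwarz inequality gives
\begin{equation*}
\left|\int_{c}^{b}l[u]\,\overline{v}\,dx\right|\leqslant \|l[u]\|_{L^{2}(\mathbb{R})}\,\|v\|_{L^{2}(\mathbb{R})},\qquad
\left|\int_{c}^{b}u\,\overline{l^{+}[v]}\,dx\right|\leqslant \|u\|_{L^{2}(\mathbb{R})}\,\|l^{+}[v]\|_{L^{2}(\mathbb{R})},
\end{equation*}
and similarly for integrals on $[a,c]$. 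These bounds are uniform in $a,b$, and more importantly, the tails $\int_{N}^{\infty}|l[u]\overline{v}|\,dx$ and $\int_{N}^{\infty}|u\overline{l^{+}[v]}|\,dx$ tend to zero as $N\to\infty$ by absolute continuity of the Lebesgue integral, hence the improper integrals $\int_{c}^{\infty}l[u]\overline{v}\,dx$ and $\int_{c}^{\infty}u\overline{l^{+}[v]}\,dx$ converge absolutely. The same reasoning on $(-\infty,c)$ gives the analogous conclusion for the other endpoint. Substituting back yields the finite limits $[u,v](\pm\infty)$.

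I do not expect any substantive obstacle: the argument is essentially a formal manipulation of the Lagrange identity together with absolute integrability of products of $L^{2}$ functions. The only subtle points are making sure that the right-hand side of the Lagrange identity is defined pointwise for every finite $a,b$, which is immediate from the definition of $\mathrm{Dom}(\mathrm{L})$ and $\mathrm{Dom}(\mathrm{L}^{+})$ (the quasi-derivatives $u^{[1]}$ and $v^{\{1\}}$ are locally absolutely continuous), and verifying that the reference value $[u,v](c)$ is finite, which follows from the same local absolute continuity.
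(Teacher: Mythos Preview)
Your argument is correct and follows essentially the same route as the paper: fix one endpoint in the Lagrange identity~\eqref{eq_GnrlLgnrIdnt_FI}, let the other go to $\pm\infty$, and use that $u,v,l[u],l^{+}[v]\in L^{2}(\mathbb{R})$ so the integrands $l[u]\overline{v}$ and $u\overline{l^{+}[v]}$ lie in $L^{1}(\mathbb{R})$. The paper's proof is simply a more compressed version of what you wrote.
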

\begin{proof}
Let us fix $b$ in relation \eqref{eq_GnrlLgnrIdnt_FI}, and proceed to the limit at $a\rightarrow-\infty$. Since $u,v,l[u],l^{+}[v]\in L^{2}(\mathbb{R})$ by the assumptions of the lemma, the limit $[u,v](-\infty)$ exists and is finite. The existence and finiteness of the limit $[u,v](\infty)$ are proved  similarly.
\end{proof}

Combining Lemmas~\ref{lm_GnrlLgrnIdnt_FI} and~\ref{lm_uv_form_lim} yields the following integral identity.
\begin{statementEng}\label{st_GnrlLgnrIdnt}
For arbitrary $u\in \mathrm{Dom}(\mathrm{L})$ and $v\in \mathrm{Dom}(\mathrm{L}^{+})$, the following identity holds:
\begin{equation}\label{eq_GLI}
 \int_{-\infty}^{\infty}l[u]\overline{v}d\,x-\int_{-\infty}^{\infty}u\overline{l^{+}[v]}d\,x=[u,v]_{-\infty}^{\infty},
\end{equation}
where the limits
\begin{equation*}\label{eq1_uv_form_lim}
[u,v](-\infty):=\lim_{x\rightarrow-\infty}[u,v](x),\qquad [u,v](\infty):=\lim_{t\rightarrow\infty}[u,v](x),\end{equation*}
exist and are finite.
\end{statementEng}

Let $\Delta=(a,b)$ with $-\infty<a<b<\infty$ be a finite interval on $\mathbb{R}$.

In the complex Hilbert space $L^{2}(\Delta)$, let us consider the operators generated by the formal Schr\"{o}dinger differential expression \eqref{fde_Shr}. Basic assumptions \eqref{cond_Main} regarding the coefficients on a finite interval take the form
\begin{equation*}\label{cond_Main_FiniteInterval}
q=s+Q', \quad  Q, r \in L^2\left(\Delta\right) \quad\text{and}\quad s \in L^1\left(\Delta\right).
\end{equation*}
Therefore, the corresponding Shin-Zettl matrix-valued function \eqref{SZ_matrix} is well posed, and the corresponding quasi-derivatives \eqref{quasiderivatives} and \eqref{quasiderivatives+} are well defined.

Let us consider the following quasi-differential expressions $l$ and $l^{+}$ on a finite interval $\Delta$:
\begin{equation*}
  l_{\Delta}[u]  :=-u^{[2]},\qquad  \mathrm{Dom}(l_\Delta)  :=\left\{u:\overline{\Delta}\rightarrow \mathbb{C}\left|\,u,\,u^{[1]}\in \mathrm{AC}(\overline{\Delta})\right.\right\},
\end{equation*}
and
\begin{equation*}
  l_{\Delta}^{+}[v]  :=-v^{\{2\}},\qquad  \mathrm{Dom}(l_\Delta^{+})  :=\left\{v:\overline{\Delta}\rightarrow \mathbb{C}\left|\,v,\,v^{\{1\}}\in \mathrm{AC}(\overline{\Delta})\right.\right\}.
\end{equation*}
They naturally generate the minimal and maximal operators in the space $L^2(\Delta)$:
\begin{align*}
\mathrm{L}_\Delta u & :=l_\Delta[u], \qquad
& \mathrm{Dom}(\mathrm{L}_\Delta) & :=\left\{u\in L^2(\Delta)\left|\,u,\,u^{[1]}\in\mathrm{AC}(\overline{\Delta}),l_\Delta[u]\in L^2(\Delta)\right.\right\}, \\
\mathrm{L}_{0\Delta}u & :=\mathrm{L}_\Delta u, \qquad
& \mathrm{Dom}(\mathrm{L}_{0\Delta}) & :=\left\{u\in \mathrm{Dom}(\mathrm{L}_\Delta) \left|\,u^{[j]}(a)=u^{[j]}(b)=0,\; j=0,1\right.\right\},
\end{align*}
and
\begin{align*}
  \mathrm{L}_{\Delta}^{+}v & :=l_{\Delta}^{+}[v], \qquad
  & \mathrm{Dom}(\mathrm{L}_{\Delta}^{+}) & :=\left\{v\in L^{2}(\Delta) \left|\,v,\,v^{\{1\}}\in \mathrm{AC}(\overline{\Delta}), l_{\Delta}^{+}[v]\in L^{2}(\Delta)\right.\right\}, \\
  \mathrm{L}_{0\Delta}^{+}v & :=l_{\Delta}^{+}[v], \qquad
  & \mathrm{Dom}(\mathrm{L}_{0\Delta}^{+}) & :=\left\{v\in \mathrm{Dom}(\mathrm{L}_{\Delta}^{+}) \left|\,v^{\{1\}}(a)=v^{\{1\}}(b)=0,\; j=0,1\right.\right\}.
\end{align*}

Let us recall a well-known result.
\begin{statementEng}[{\cite[Theorem~10,~11]{Zttl1975}}]\label{st_QDO_PropertiesFI}
The minimal operators $\mathrm{L}_{0\Delta}$ and $\mathrm{L}_{0\Delta}^{+}$ are densely defined in the Hilbert space $L^{2}(\Delta)$, and the following statements hold:
\begin{align*}
   & (a)\hspace{5pt} (\mathrm{L}_{0\Delta})^{\ast}=\mathrm{L}_{\Delta}^{+}, \hspace{50pt} && (c)\hspace{5pt} \mathrm{L}_{0\Delta}=(\mathrm{L}_{\Delta}^{+})^{\ast},\hspace{175pt} \\
   & (b)\hspace{5pt} (\mathrm{L}_{0\Delta}^{+})^{\ast}=\mathrm{L}_{\Delta}, \hspace{50pt} && (d)\hspace{5pt} \mathrm{L}_{0\Delta}^{+}=(\mathrm{L}_{\Delta})^{\ast}.
\end{align*}
\end{statementEng}

\begin{proof}[Proof of Theorem~\ref{th_Properties}]
The proof of the properties $1^{0}$ -- $4 ^{0}$ is similar to the case of the semi-axis and formally self-adjoint differential expressions $l=l^{+}$ \cite{Zttl1975, NmrkRus1969}. In their proof, the properties of operators defined on a finite interval are used. For the reader's convenience, we will give the proof.


$1^{0}$. Let us prove the density of the domain of the pre-minimal operator $\mathrm{Dom}(\mathrm{L}_{00})$ in the Hilbert space $L^2(\mathbb{R})$.

Suppose that a function $h\in L^2(\mathbb{R})$ is orthogonal to $\mathrm{Dom}(\mathrm{L}_{00})$, and show that $h\equiv0$, which will mean the density of $\mathrm{Dom}(\mathrm{L}_{00})$ in the space $L^2(\mathbb{R})$.

Let $\Delta$ be an arbitrary fixed finite interval. The quasi-differential expression $l$ defines operators $\mathrm{L}_{0\Delta}$ and $\mathrm{L}_\Delta$. The space $L^{2}(\Delta)$ can be embedded in the space $L^{2}(\mathbb{R})$ if we assume that outside the interval $\overline{\Delta}$, the function $u\in L^{2}(\Delta)$ is zero. In this way, the domain $\mathrm{Dom}(\mathrm{L}_{0\Delta})$ of the operator $\mathrm{L}_{0\Delta}$ takes part of $\mathrm{Dom}(\mathrm{L})$. Moreover, the function thus extended belongs to $\mathrm{Dom}(\mathrm{L}_{00})$.

Thus, the function $h$ is orthogonal to $\mathrm{Dom}(\mathrm{L}_{0\Delta})$. Then, by  Statement~\ref{st_QDO_PropertiesFI}, the set $\mathrm{Dom}(\mathrm{L}_{0\Delta})$ is dense in the space $L^2(\Delta)$. Therefore, $h|_{\overline{\Delta}}=0$ almost everywhere.

Since the interval $\Delta\subset \mathbb{R}$ is chosen arbitrarily, we have $h=0$ almost everywhere on $\mathbb{R}$, which is equivalent to the density of $\mathrm{Dom}(\mathrm{L}_{00})$ in $L^2(\mathbb{R})$.

The density of the domain of the pre-minimal operator $\mathrm{Dom}(\mathrm{L}_{00}^{+})$ in $L^2(\mathbb{R})$ is proved similarly.


$2^{0}$. Let us prove the first relation
\begin{equation*}
\left(\mathrm{L}_{00}\right)^{\ast}=\mathrm{L}^{+}.
\end{equation*}
The second relation is proved similarly.

It follows from $1^{0}$ that for the operator $\mathrm{L}_{00}$, there exists the adjoint operator $(\mathrm{L}_{00})^{\ast}$.

Let $u\in \mathrm{Dom}(\mathrm{L}_{00})$ and $v\in \mathrm{Dom}(\mathrm{L}^{+})$. Then, using the generalized Lagrange identity~\eqref{eq_GLI}, we obtain
\begin{equation*}\label{eq_Prp20}
(\mathrm{L}_{00}u,v)_{L^2(\mathbb{R})}=(u,\mathrm{L}^{+}v)_{L^2(\mathbb{R})}.
\end{equation*}
Therefore $\mathrm{L}^{+}\subset (\mathrm{L}_{00})^*$. Let us prove the inverse inclusion.

Let $v$ be an arbitrary element of $\mathrm{Dom}\left((\mathrm{L}_{00})^*\right)$, and let $\Delta=(\alpha,\beta)$ be a fixed finite interval on $\mathbb{R}$. Then
\begin{equation*}\label{eq_Prp22}
\left((\mathrm{L}_{00})^*v,u\right)_{L^2(\mathbb{R})}=\left(v,\mathrm{L}_{00}u\right)_{L^2(\mathbb{R})}\quad\text{for every}\quad u\in \mathrm{Dom}(\mathrm{L}_{0\Delta}).
\end{equation*}
Since $u|_{\mathbb{R}\setminus\overline{\Delta}}=0$, the scalar products are represented as integrals over the interval $\overline{\Delta}$, that is, they are scalar products in $L^{2}(\Delta)$, and
\begin{equation}\label{eq_Prp24}
\left(((\mathrm{L}_{00})^*v)_\Delta,u\right)_{L^2(\Delta)}=\left(v_\Delta,\mathrm{L}_{0\Delta}u\right)_{L^2(\Delta)} \quad\text{for every}\quad u\in \mathrm{Dom}(\mathrm{L}_{0\Delta}).
\end{equation}
Here $((\mathrm{L}_{00})^*v)_\Delta$ and $v_\Delta$ are restrictions of the functions to the interval~$\overline{\Delta}$.

Using \eqref{eq_Prp24} and applying the generalized Lagrange identity~\eqref{eq_GnrlLgnrIdnt_FI}, we obtain
\begin{equation*}
((\mathrm{L}_{00})^*v)_\Delta=\mathrm{L}_{\Delta}^{+} v_\Delta=\left(l^{+}[v]\right)_\Delta.
\end{equation*}
Since the interval $\Delta$ can be chosen arbitrarily, then
\begin{equation*}
v\in \mathrm{Dom}(\mathrm{L}^{+}) \quad\text{and}\quad (\mathrm{L}_{00})^{\ast}v=l^{+}[v]=\mathrm{L}^{+}v,
\end{equation*}
which completes the proof.


$3^{0}$. This statement is a direct consequence of property~$2^{0}$.

$4^{0}$. Since the operator $\mathrm{L}_0$ is closed, and because
\begin{equation*}
  (\mathrm{L}_{0})^{\ast}=\mathrm{L}^{+}\quad\text{and}\quad \mathrm{L}_{0}=(\mathrm{L}^{+})^{\ast},
\end{equation*}
it is obvious that the set $\mathrm{Dom}\left(\mathrm{L}_0\right)$ consists of those and only those functions $u\in \mathrm{Dom}\left(\mathrm{L}\right)$ that satisfy the relation
\begin{equation*}
(u,\mathrm{L}^{+}v)_{L^{2}(\mathbb{R})}=(\mathrm{L}u,v)_{L^{2}(\mathbb{R})}\quad\text{for every}\quad  v\in \mathrm{Dom}\left(\mathrm{L}^{+}\right),
\end{equation*}
which, taking into account the generalized Lagrange identity on $\mathbb{R}$, is equivalent to statement~$4^{0}$.


$5^{0}$. By assumptions~\eqref{cond_Main}, we have $G_{1}, G_{2} \in L_{loc}^2(\mathbb{R})$ and $\mathrm{Dom}(\mathrm{L}_{00})\subset W^1_{2,comp}(\mathbb{R})$. Hence, integrating by parts, we obtain for any $u\in\mathrm{Dom}(\mathrm{L}_{00})$ the following:
\begin{align*}
  (\mathrm{L}_{00}u,u)_{L^{2}(\mathbb{R})} & =\int_{\mathbb{R}}\left(-(u^{[1]})'\overline{u}-G_{2}u^{[1]}\overline{u}-G_{1}G_{2}|u|^{2}+s|u|^{2}\right)d\,x   \\
  & =\int_{\mathbb{R}}\left((u^{[1]}\overline{u}'-G_{2}u^{[1]}\overline{u}-G_{1}G_{2}|u|^{2}+s|u|^{2}\right)d\,x \\
  & =\int_{\mathbb{R}}\left((u'-G_{1}u)\overline{u}'-G_{2}(u'-G_{1}u)\overline{u}-G_{1}G_{2}|u|^{2}+s|u|^{2}\right)d\,x \\
  & =\int_{\mathbb{R}}\left(|u'|^{2}-\left(G_{1}u\overline{u}'+G_{2}u'\overline{u}\right)+s|u|^{2}\right)d\,x.
\end{align*}


The proof of Theorem~\ref{th_Properties} is complete.

\end{proof}


\section{Uniqueness Problem}\label{sec_ShrOp_mAccretivity}
The uniqueness problem for a differential expression $l$ in the Hilbert space $L^{2}(\mathbb{R})$ consists in finding conditions on its coefficients $q$ and $r$ under which the equality $\mathrm{L}_{0}=\mathrm{L}$ holds. If these functions are real-valued, then the operator $\mathrm{L}_{0}$ is symmetric, and $\mathrm{L}= \mathrm{L}_{0}^{\ast}$. We thus deal with the self-adjointness conditions for the symmetric operator $\mathrm{L}_{0}$ in the Hilbert space $L^{2}(\mathbb{R})$. In the case when $r\equiv 0$ and $q\in L_{loc}^{1}(\mathbb{R})$, this topic was studied in detail by many analysts. In particular, H.~Weyl~\cite{Weyl1910} was the first to prove the self-adjointness of the operator $\mathrm{L}_{0}$ in the case where $r\equiv 0$ and when the function $q$ is continuous and bounded from below on $\mathbb{R}$. It is easy to see that the operator $\mathrm{L}_{0}$ is bounded below in this case. In this connection, the hypothesis arose that this abstract condition already guarantees the self-adjointness of the operator $\mathrm{L}_{0}$. It turned out to be true even for the more general Sturm--Liouville differential expression
\begin{equation*}
l[u] =-(pu')'+qu + i[(ru)'+ru'].
\end{equation*}
Hartman~\cite{Hrtmn1948} and Rellich~\cite{Rllch1951} showed that $\mathrm{L}_{0}= \mathrm{L}$ if
\begin{equation*}\label{cond_HR}
\int_{0}^{\infty}p^{-1/2}(t)d\,t=\int_{-\infty}^{0}p^{-1/2}(t)d\,t=\infty,
\end{equation*}
$r\equiv 0$, $0<p\in C^2(\mathbb{R})$ and the function $q$ is piecewise continuous on $\mathbb{R}$. Under more general assumptions
\begin{equation*}
  q=Q'+s,\quad \frac{1}{\sqrt{|p|}}, \frac{Q}{\sqrt{|p|}}, \frac{r}{\sqrt{|p|}} \in L^2_{loc}\left(\mathbb{R}\right), \quad s \in L^1_{loc}\left(\mathbb{R}\right), \quad
  0<p\in W_{2,loc}^{1}(\mathbb{R}),
\end{equation*}
this fact was proven by the authors in \cite{MkhGrnMlb2022}.

Similar theorems have been proved for second-order elliptic differential operators in the spaces $L^{2}(\mathbb{R}^{n})$, with $n\in \mathbb{N}$, and other classes of operators (see, for example, \cite{Orchk1988, Pvznr1953, Shbn2001, StHnsn1966, Wnhltz1958} and the references therein).

The case where the symmetric operator $\mathrm{L}_{0}$ is not bounded from below was also studied quite thoroughly. In particular, Sears~\cite{Sears1950} showed that if $r\equiv 0$, $q\in C(\mathbb{R})$, and if there exists an even continuous function $Q(\cdot)\geq \delta>0$ on $\mathbb{R}$ such that
\begin{equation*}
  q(x)\geq -Q(x), \qquad \int_{0}^{\infty}Q^{-1/2}(x)dx=\infty,
\end{equation*}
furthermore, either the function $Q$ is monotone on $[0;\infty)$ or
\begin{equation*}
  \limsup_{x\rightarrow\infty}\frac{Q'(x)}{\{Q(x)\}^{3/2}}< \infty,
\end{equation*}
then the operator $\mathrm{L}_{0}$ is self-adjoint. See also papers \cite{Hrtmn1948, Ismgl1963, Orchk1974} and references therein.

The case when the coefficients of the differential expression $l$ are complex-valued is more complicated because the operator $\mathrm{L}_{0}$ in general is non-symmetric. The question about the conditions for its $m$-accretivity, to the best of the authors' knowledge, has not been previously studied, even in the case of continuous coefficients $q$ and $r$.

In this paper, we investigate the case when these coefficients are singular and satisfy assumption \eqref{cond_Main}. To this end, we develop a methodology of \cite{MkhGrnMlb2022}, where symmetric Sturm--Liouville operators in $L^{2}(\mathbb{R})$ were studied. The main result of this section is Theorem~\ref{th_MAccretivityMAIN_A}. Its proof is based on the results of Section~\ref{sec:SO}.

Let us introduce some notation. Given a real-valued function $f\in L_{loc}^{1}(\mathrm{R})$, we let
\begin{align*}
    f^{+} & :=\frac{|f|+f}{2}\geq 0, \qquad  f^{-}:=\frac{|f|-f}{2}\geq 0, \qquad f=f^{+}-f^{-}, \\
    r & =r_{0}+ir_{1}, \qquad \text{where}\qquad r_{0}:=\mathrm{Re}\,r, \quad r_{1}:=\mathrm{Im}\,r.
\end{align*}

\begin{theoremEng}\label{th_MAccretivityMAIN_A}
Let the coefficients of the formal differential Schr\"{o}dinger expression \eqref{fde_Shr} satisfy conditions \eqref{cond_Main}, and suppose that the pre-minimal operators $\mathrm{L}_{00}$ and $\mathrm{L}_{00}^{+}$ are accretive and
\begin{equation*}
  r_{1}^{\pm}(x)=O(m(x)) \quad\text{as}\quad x\rightarrow \mp\infty,
\end{equation*}
where the function $m(x)$ satisfies
\begin{align*}
  (\mathrm{m}_{1})\hspace{5pt} & 1\leq m(x)\in W_{2,loc}^{1}(\mathbb{R}), \\
  (\mathrm{m}_{2})\hspace{5pt} & \int\limits_{-\infty}^{0}\frac{d\,s}{m(s)}=\int\limits_{0}^{+\infty}\frac{d\,s}{m(s)}=+\infty. \hspace{315pt}
\end{align*}
Then the minimal operators $\mathrm{L}_{0}$ and $\mathrm{L}_{0}^{+}$ are $m$-accretive, i. e., $\mathrm{L}_{0}=\mathrm{L}$ and $\mathrm{L}_{0}^{+}=\mathrm{L}^{+}$.
\end{theoremEng}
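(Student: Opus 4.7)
The plan is to reduce the entire theorem to the single equality $\mathrm{L}_{0}=\mathrm{L}$. The adjoint identity $\mathrm{L}_{0}^{\ast}=\mathrm{L}^{+}$ is in fact automatic: property~$2^{0}$ of Theorem~\ref{th_Properties} gives $(\mathrm{L}_{00})^{\ast}=\mathrm{L}^{+}$, and the general identity $(\overline{\mathrm{A}})^{\ast}=\mathrm{A}^{\ast}$ for a densely defined closable operator yields $\mathrm{L}_{0}^{\ast}=\mathrm{L}_{00}^{\ast}=\mathrm{L}^{+}$ unconditionally. Once $\mathrm{L}_{0}=\mathrm{L}$ is established, taking adjoints gives $\mathrm{L}^{+}=\mathrm{L}_{0}^{+}$, which as the closure of the accretive operator $\mathrm{L}_{00}^{+}$ is itself accretive. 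Since $\mathrm{L}_{0}$ is then closed, densely defined, accretive, and has accretive adjoint, Phillips' theorem makes both $\mathrm{L}_{0}$ and $\mathrm{L}_{0}^{\ast}$ $m$-accretive.

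By property~$4^{0}$ the equality $\mathrm{L}_{0}=\mathrm{L}$ amounts to
\begin{equation*}
[u,v]_{-\infty}^{\infty}=0 \qquad \forall\,u\in\mathrm{Dom}(\mathrm{L}),\ v\in\mathrm{Dom}(\mathrm{L}^{+}).
\end{equation*}
Since Lemma~\ref{lm_uv_form_lim} already provides the existence of the one-sided limits $[u,v](\pm\infty)$, it suffices to show that each of them vanishes. I would treat the limit at $+\infty$ using the hypothesis $r_{1}^{-}(x)=O(m(x))$ as $x\to+\infty$; the argument at $-\infty$ is symmetric under $r_{1}^{+}(x)=O(m(x))$ as $x\to-\infty$.

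The technical heart of the proof is a weighted a-priori bound: for every $u\in\mathrm{Dom}(\mathrm{L})$,
\begin{equation*}
\int_{0}^{+\infty}\frac{|u(x)|^{2}+|u^{[1]}(x)|^{2}}{m(x)}\,dx<+\infty,
\end{equation*}
and the analogue for $v\in\mathrm{Dom}(\mathrm{L}^{+})$ with $v^{\{1\}}$ in place of $u^{[1]}$. To produce it, I would introduce the monotone function $\varphi(x):=\int_{0}^{x}m(s)^{-1}\,ds$, which is finite on $\mathbb{R}$ but divergent at $\pm\infty$ by $(\mathrm{m}_{2})$, choose smooth cut-offs $\chi_{n}$ depending only on $\varphi$ so that $\chi_{n}'$ carries a factor $1/m$, and observe that $\chi_{n}u\in\mathrm{Dom}(\mathrm{L}_{00})$. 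Applying accretivity to $\chi_{n}u$ and expanding $\mathrm{Re}(\mathrm{L}_{00}(\chi_{n}u),\chi_{n}u)$ via property~$5^{0}$ and the generalized Lagrange identity of Lemma~\ref{lm_GnrlLgrnIdnt_FI}, one arrives at a bound in which the indefinite contribution of $\mathrm{Im}\,r$ is precisely absorbed by $r_{1}^{\pm}\le Cm$; passing $n\to\infty$ yields the weighted estimate, and the same argument applied to $\mathrm{L}_{00}^{+}$ yields the corresponding bound for $v$. The elementary inequality
\begin{equation*}
|[u,v](x)|\le\sqrt{2}\,\bigl(|u(x)|^{2}+|u^{[1]}(x)|^{2}\bigr)^{1/2}\bigl(|v(x)|^{2}+|v^{\{1\}}(x)|^{2}\bigr)^{1/2}
\end{equation*}
combined with Cauchy--Schwarz then shows that $|[u,v]|/m$ is integrable on $(0,+\infty)$, and since $\int^{+\infty}m^{-1}\,ds=+\infty$ this forces $\liminf_{x\to+\infty}|[u,v](x)|=0$; in view of the existence of the limit, we conclude $[u,v](+\infty)=0$.

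The principal obstacle is the derivation of the weighted estimate itself. The accretivity of $\mathrm{L}_{00}$ delivers only a real-part inequality, whereas the imaginary drift $i[(ru)'+ru']$ contributes an antisymmetric term whose sign is favourable only at one end of the line at a time; this is precisely what is encoded in the asymmetric hypothesis on $r_{1}^{+}$ at $-\infty$ versus $r_{1}^{-}$ at $+\infty$. The cut-offs $\chi_{n}$ have to be calibrated so that their derivatives are small in the right sense (which is where $(\mathrm{m}_{1})$ enters and ensures $\chi_{n}u$ remains a legitimate test function) and so that the $r_{1}$-term acquires the correct sign to be dominated by $m$. Carrying out this manipulation through the Shin--Zettl quasi-derivatives, for coefficients that are merely $L^{2}_{loc}$, and controlling every commutator remainder produced by cutting off $u$ against the weight $\varphi$, is the delicate technical part of the argument.
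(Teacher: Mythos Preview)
Your reductions at the start are sound, and the route via the boundary form $[u,v]_{-\infty}^{\infty}$ is a legitimate alternative in principle. But the paper proceeds quite differently, and the comparison exposes a genuine gap in your sketch.

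The paper does not attempt to bound the quasi-derivatives of arbitrary elements of $\mathrm{Dom}(\mathrm{L})$ or $\mathrm{Dom}(\mathrm{L}^{+})$. Instead it shifts so that $\mathrm{Re}\,(w,\mathrm{L}_{00}^{+}w)\geq\|w\|^{2}$, takes $v\in\mathcal{N}(\mathrm{L}^{+})$, and applies this inequality to $w=\varphi v$ with real $\varphi\in W_{2,\mathrm{comp}}^{2}(\mathbb{R})$. Because $l^{+}[v]=0$, the expansion of $l^{+}[\varphi v]$ from Lemma~\ref{lm_phi_u} contains only commutator terms; after one integration by parts every occurrence of $v'$ disappears and one is left with
\[
\int_{\mathbb{R}}\varphi^{2}|v|^{2}\,dx\;\leq\;\int_{\mathbb{R}}(\varphi')^{2}|v|^{2}\,dx+2\int_{\mathbb{R}} r_{1}\,\varphi'\varphi\,|v|^{2}\,dx.
\]
Choosing $\varphi=\varphi_{n}(\rho(\cdot))$ with $\rho(x)=\int_{0}^{x}m^{-1}$ and standard plateau cut-offs $\varphi_{n}$ then forces $v\equiv0$; the symmetric argument with $\mathrm{L}_{00}$ gives $\mathcal{N}(\mathrm{L})=\{0\}$, and maximality of $m$-accretive operators yields $\mathrm{L}_{0}=\mathrm{L}$.

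The point where your approach stalls is the weighted estimate $\int_{0}^{\infty}|u^{[1]}|^{2}m^{-1}\,dx<\infty$. If you expand $\mathrm{Re}\,(\mathrm{L}_{00}(\chi_{n}u),\chi_{n}u)$ for a general $u\in\mathrm{Dom}(\mathrm{L})$ via Lemma~\ref{lm_phi_u} and integrate by parts, you obtain
\[
\mathrm{Re}\,(\mathrm{L}_{00}(\chi_{n}u),\chi_{n}u)=\mathrm{Re}\!\int_{\mathbb{R}}\chi_{n}^{2}\,l[u]\,\overline{u}\,dx+\int_{\mathbb{R}}(\chi_{n}')^{2}|u|^{2}\,dx-2\int_{\mathbb{R}} r_{1}\,\chi_{n}'\chi_{n}|u|^{2}\,dx,
\]
and no term involving $u^{[1]}$ or $u'$ survives; accretivity here yields only an inequality between weighted $L^{2}$-norms of $u$. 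Property~$5^{0}$ does not help either: mere accretivity of the full quadratic form says nothing about the size of the individual piece $\int|(\chi_{n}u)'|^{2}$, because the remaining terms have no sign --- that would require sectoriality or semiboundedness of the symmetric part, neither of which is assumed. In short, the hypotheses do not deliver the gradient-type control your boundary-form argument needs; this is exactly why the paper restricts to kernel elements, for which the derivative terms cancel.
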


\begin{remarkEng}
In the case where $q\in L_{loc}^{2}(\mathbb{R})$ and $r\in W_{2,loc}^{1}(\mathbb{R})$, the operators $\mathrm{L}_{00}$ and $\mathrm{L}_{00}^{+}$ are accretive if and only if both are accretive. Therefore, the accretivity assumption for $\mathrm{L}_{00}^{+}$ in Theorem~\ref{th_MAccretivityMAIN_A} can be removed in this case.
\end{remarkEng}

\begin{remarkEng}
From the equalities $\left(\mathrm{L}_{00}\right)^{\ast}=\mathrm{L}^{+}$ and $\left(\mathrm{L}_{00}^{+}\right)^{\ast}=\mathrm{L}$ of Theorem \ref{th_Properties} it follows that the accretivity of the operator $\mathrm{L}_{00}^{+}$ is a necessary condition for the $m$-accretivity of the operator $L_0$.
\end{remarkEng}

\begin{corollaryEng}\label{cor_ThA_10}
Let $\Lambda$ be an arbitrary closed sector of the half-plane $\mathrm{Re}\,\lambda\geq 0$ with vertex at the point $0$, and let assumptions $(\mathrm{m}_{1})$, $(\mathrm{m}_{2})$ of Theorem~\ref{th_MAccretivityMAIN_A} be satisfied. If
\begin{equation*}
  (\mathrm{L}_{00}u,u)_{L^2(\mathbb{R})}\in \Lambda\quad\text{for every}\quad u \in \mathrm{Dom}(\mathrm{L}_{00}), \quad\text{and if}\quad (\mathrm{L}_{00}^{+}v,v)_{L^2(\mathbb{R})}\in \overline{\Lambda}\quad\text{for every}\quad v \in \mathrm{Dom}(\mathrm{L}_{00}^{+}),
\end{equation*}
then $\rho(\mathrm{L}_{0})\supset\mathbb{C}\setminus \Lambda$ and $\rho(\mathrm{L}_{0}^{+})\supset\mathbb{C}\setminus \overline{\Lambda}$.
\end{corollaryEng}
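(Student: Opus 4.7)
The plan is to apply Theorem~\ref{th_MAccretivityMAIN_A} to obtain $m$-accretivity together with the identifications $\mathrm{L}_{0}=\mathrm{L}$ and $\mathrm{L}_{0}^{+}=\mathrm{L}^{+}$, and then to upgrade the left half-plane already in $\rho(\mathrm{L}_{0})$ to all of $\mathbb{C}\setminus\Lambda$ by combining a numerical-range lower bound with a duality argument via the adjoint.

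First I would verify the hypotheses of Theorem~\ref{th_MAccretivityMAIN_A}. Since $\Lambda\subset\{\mathrm{Re}\,\lambda\geq 0\}$, the assumption $(\mathrm{L}_{00}u,u)_{L^{2}(\mathbb{R})}\in\Lambda$ forces $\mathrm{Re}(\mathrm{L}_{00}u,u)_{L^{2}(\mathbb{R})}\geq 0$, so $\mathrm{L}_{00}$ is accretive; the conjugate sector $\overline{\Lambda}$ also lies in the closed right half-plane, so $\mathrm{L}_{00}^{+}$ is accretive as well. Theorem~\ref{th_MAccretivityMAIN_A} then yields $\mathrm{L}_{0}=\mathrm{L}$ and $\mathrm{L}_{0}^{+}=\mathrm{L}^{+}$, both $m$-accretive, and combined with Theorem~\ref{th_Properties}~$2^{0}$ this gives $(\mathrm{L}_{0})^{\ast}=(\mathrm{L}_{00})^{\ast}=\mathrm{L}^{+}=\mathrm{L}_{0}^{+}$.

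Next I would push the sectorial bound from the pre-minimal operators to their closures. For $u\in\mathrm{Dom}(\mathrm{L}_{0})$, pick $u_{n}\in\mathrm{Dom}(\mathrm{L}_{00})$ with $u_{n}\to u$ and $\mathrm{L}_{00}u_{n}\to\mathrm{L}_{0}u$ in $L^{2}(\mathbb{R})$; then $(\mathrm{L}_{00}u_{n},u_{n})\to(\mathrm{L}_{0}u,u)$, and because each term lies in the closed set $\Lambda$, so does the limit. Hence $(\mathrm{L}_{0}u,u)\in\Lambda$ for every $u\in\mathrm{Dom}(\mathrm{L}_{0})$, and the identical argument yields $(\mathrm{L}_{0}^{+}v,v)\in\overline{\Lambda}$ for every $v\in\mathrm{Dom}(\mathrm{L}_{0}^{+})$.

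For $\lambda\in\mathbb{C}\setminus\Lambda$ set $\delta:=d(\lambda,\Lambda)>0$. The Cauchy--Schwarz inequality combined with the previous step gives $\|(\mathrm{L}_{0}-\lambda)u\|\,\|u\|\geq|((\mathrm{L}_{0}-\lambda)u,u)|\geq\delta\|u\|^{2}$, so $\mathrm{L}_{0}-\lambda$ is injective with closed range. Since $\overline{\lambda}\notin\overline{\Lambda}$, the same bound applied to $\mathrm{L}_{0}^{+}$ shows $\mathrm{L}_{0}^{+}-\overline{\lambda}$ is injective, and using $(\mathrm{L}_{0})^{\ast}=\mathrm{L}_{0}^{+}$ one obtains $\mathrm{Ran}(\mathrm{L}_{0}-\lambda)^{\perp}=\mathrm{Ker}((\mathrm{L}_{0})^{\ast}-\overline{\lambda})=\{0\}$. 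Together with closedness this forces $\mathrm{Ran}(\mathrm{L}_{0}-\lambda)=L^{2}(\mathbb{R})$, so $\lambda\in\rho(\mathrm{L}_{0})$; swapping the roles of $(\mathrm{L}_{0},\Lambda)$ and $(\mathrm{L}_{0}^{+},\overline{\Lambda})$ gives $\mathbb{C}\setminus\overline{\Lambda}\subset\rho(\mathrm{L}_{0}^{+})$. No step is genuinely hard once Theorem~\ref{th_MAccretivityMAIN_A} is available; the only point to watch is the adjoint identification $(\mathrm{L}_{0})^{\ast}=\mathrm{L}_{0}^{+}$, which uses \emph{both} Theorem~\ref{th_Properties}~$2^{0}$ and the conclusion $\mathrm{L}_{0}^{+}=\mathrm{L}^{+}$ of Theorem~\ref{th_MAccretivityMAIN_A}, since without the latter one has only the inclusion $\mathrm{L}_{0}^{+}\subset\mathrm{L}^{+}$ and the surjectivity step would collapse.
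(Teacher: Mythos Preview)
Your argument is correct and complete. The paper itself states Corollary~\ref{cor_ThA_10} without proof, treating it as a direct consequence of Theorem~\ref{th_MAccretivityMAIN_A}; your write-up supplies precisely the standard numerical-range argument one expects here, and you correctly isolate the one nontrivial point, namely that the surjectivity step for $\mathrm{L}_{0}-\lambda$ requires the adjoint identification $(\mathrm{L}_{0})^{\ast}=\mathrm{L}_{0}^{+}$, which in turn rests on \emph{both} Theorem~\ref{th_Properties}~$2^{0}$ and the equality $\mathrm{L}_{0}^{+}=\mathrm{L}^{+}$ from Theorem~\ref{th_MAccretivityMAIN_A}.

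Two minor remarks. First, the phrase ``assumptions $(\mathrm{m}_{1})$, $(\mathrm{m}_{2})$'' in the corollary is evidently meant to include the growth condition $r_{1}^{\pm}(x)=O(m(x))$ as $x\to\mp\infty$ tying $m$ to the operator; without it the hypotheses on $m$ alone are vacuous, and you are right to read it this way so that Theorem~\ref{th_MAccretivityMAIN_A} applies. Second, in the lower bound $|((\mathrm{L}_{0}-\lambda)u,u)|\geq\delta\|u\|^{2}$ you are implicitly using that $\Lambda$, being a sector with vertex at $0$, is a cone: one rescales to $\|u\|=1$, uses $(\mathrm{L}_{0}u,u)\in\Lambda$, and then the distance bound $|(\mathrm{L}_{0}u,u)-\lambda|\geq d(\lambda,\Lambda)$ gives the inequality for all $u$ by homogeneity. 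This is routine but worth a half-line if you write it out.
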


First, we will prove some necessary technical results.
\begin{lemmaEng}\label{lm_phi_u}
Let initial assumptions~\eqref{cond_Main} be satisfied. Then, for an arbitrary compactly supported function $\varphi\in W_{2,comp}^{2}(\mathbb{R})$ and for all $u\in \mathrm{Dom}(\mathrm{L})$ and $v\in \mathrm{Dom}(\mathrm{L}^{+})$, the  following statements hold:
\begin{align*}
 \mathtt{(i)}\hspace{5pt} & \varphi u  \in \mathrm{AC}_{comp}(\mathbb{R}), \\
& \varphi v  \in \mathrm{AC}_{comp}(\mathbb{R}); \\
\mathtt{(ii)}\hspace{5pt} & (\varphi u)^{[1]}  =\varphi' u+ \varphi u^{[1]}\in \mathrm{AC}_{comp}(\mathbb{R}), \\
& (\varphi v)^{[1]}  =\varphi' v+ \varphi v^{\{1\}}\in \mathrm{AC}_{comp}(\mathbb{R}); \\
\mathtt{(iii)}\hspace{5pt} &  l[\varphi u] =\varphi l[u]-\varphi'' u-2\varphi' u'+\left(G_{1}-G_{2}\right)\varphi' u \in L_{comp}^{2}(\mathbb{R}), \\
 & l^{+}[\varphi v]  =\varphi l^{+}[v]-\varphi'' v-2\varphi' v'+\left(\overline{G_{2}}-\overline{G_{1}}\right)\varphi' v \in L_{comp}^{2}(\mathbb{R}); \\
\mathtt{(iv)}\hspace{5pt} & \mathrm{supp}\,\varphi u  \text{ is a compact set}, \\
 & \mathrm{supp}\,\varphi v  \text{ is a compact set}, \hspace{275pt}
\end{align*}
and
\begin{equation*}
  \varphi u\in \mathrm{Dom}(\mathrm{L}_{00})\subset W_{2,comp}^{1}(\mathbb{R}) \quad \text{and} \quad \varphi v\in \mathrm{Dom}(\mathrm{L}_{00}^{+})\subset W_{2,comp}^{1}(\mathbb{R}).
\end{equation*}

In particular, if $a'<a<b<b'$ and if the function $\varphi\in W_{2,comp}^{2}(\mathbb{R})$ satisfies the conditions
\begin{align*}
\mathtt{(a)}\hspace{5pt} & 0 \leq\varphi(x) \leq 1,\quad x\in \mathbb{R},\hspace{295pt} \\
\mathtt{(b)}\hspace{5pt} & \varphi(x) = \begin{cases}
1, & x\in [a;b];\\
0, &x\in (\infty; a')\bigcup (b';\infty),
\end{cases}
\end{align*}
then the conclusions of the lemma hold.
\end{lemmaEng}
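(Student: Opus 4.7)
The plan is to proceed by a direct computation of the Shin--Zettl quasi-derivatives of the product $\varphi u$, relying on the product rule, and to verify $L^2$ bounds term by term on the compact set $\mathrm{supp}\,\varphi$. One preliminary observation will unlock the whole argument: every $u\in\mathrm{Dom}(\mathrm{L})$ in fact lies in $W^{1}_{2,loc}(\mathbb{R})$. Indeed, from the very definition $u^{[1]}=u'-G_{1}u\in\mathrm{AC}_{loc}(\mathbb{R})$, so $u^{[1]}$ is locally bounded; since $u\in\mathrm{AC}_{loc}$ is also locally bounded and $G_{1}\in L^{2}_{loc}$ by \eqref{cond_Main}, the identity $u'=u^{[1]}+G_{1}u$ places $u'$ in $L^{2}_{loc}(\mathbb{R})$. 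The analogous conclusion $v\in W^{1}_{2,loc}(\mathbb{R})$ holds for $v\in\mathrm{Dom}(\mathrm{L}^{+})$ via $v'=v^{\{1\}}+\overline{G_{2}}v$.

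\textbf{Items (i), (ii), (iv).} Since $\varphi\in W^{2}_{2,comp}(\mathbb{R})$, both $\varphi$ and $\varphi'$ lie in $\mathrm{AC}_{comp}(\mathbb{R})$. The product of a compactly supported AC function with a locally AC function is itself AC with compact support, which yields (i) and the compact-support statements in (iv). For (ii), the first quasi-derivative is computed straight from \eqref{quasiderivatives}:
\[
(\varphi u)^{[1]}=(\varphi u)'-G_{1}\varphi u=\varphi' u+\varphi(u'-G_{1}u)=\varphi' u+\varphi u^{[1]},
\]
and membership in $\mathrm{AC}_{comp}(\mathbb{R})$ follows because each summand is a product of an $\mathrm{AC}_{loc}$ factor by an $\mathrm{AC}_{comp}$ factor.

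\textbf{Item (iii) --- the main computational step.} Starting from $(\varphi u)^{[2]}=\bigl((\varphi u)^{[1]}\bigr)'+G_{2}(\varphi u)^{[1]}+(G_{1}G_{2}-s)\varphi u$, one differentiates the expression $\varphi' u+\varphi u^{[1]}$ obtained in (ii) and replaces $(u^{[1]})'$ by $u^{[2]}-G_{2}u^{[1]}-(G_{1}G_{2}-s)u$ using \eqref{quasiderivatives}. Rewriting the stray term $\varphi' u^{[1]}$ as $\varphi' u'-G_{1}\varphi' u$ and collecting, the summands involving $G_{2}u^{[1]}$ and $(G_{1}G_{2}-s)u$ cancel, leaving
\[
(\varphi u)^{[2]}=\varphi u^{[2]}+\varphi'' u+2\varphi' u'+(G_{2}-G_{1})\varphi' u,
\]
which, after taking negatives, is the stated identity for $l[\varphi u]$. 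The $L^{2}_{comp}(\mathbb{R})$ claim then reduces to a term-by-term estimate on $\mathrm{supp}\,\varphi$: $\varphi\,l[u]\in L^{2}$ since $l[u]\in L^{2}$ and $\varphi\in L^{\infty}$; $\varphi'' u\in L^{2}_{comp}$ because $\varphi''\in L^{2}_{comp}$ and $u$ is locally bounded; $\varphi' u'\in L^{2}_{comp}$ by the preliminary observation $u'\in L^{2}_{loc}$; and $(G_{1}-G_{2})\varphi' u\in L^{2}_{comp}$ because $G_{1},G_{2}\in L^{2}_{loc}$ while $\varphi' u\in L^{\infty}_{comp}$.

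\textbf{Conclusion and parallel statements.} Combining (i)--(iv), the function $\varphi u$ satisfies $\varphi u,(\varphi u)^{[1]}\in \mathrm{AC}_{loc}(\mathbb{R})$ with $l[\varphi u]\in L^{2}(\mathbb{R})$ and compact support, so $\varphi u\in\mathrm{Dom}(\mathrm{L}_{00})$; inclusion in $W^{1}_{2,comp}(\mathbb{R})$ is immediate from $(\varphi u)'=\varphi' u+\varphi u'\in L^{2}_{comp}(\mathbb{R})$. The companion claims for $v\in\mathrm{Dom}(\mathrm{L}^{+})$ follow from an identical computation with $G_{1}\leftrightarrow\overline{G_{2}}$ and $G_{2}\leftrightarrow\overline{G_{1}}$ using \eqref{quasiderivatives+}. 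The concluding special case (a)--(b) is then an immediate instance of the general statement applied to a cut-off $\varphi$ supported in $[a',b']$ equal to $1$ on $[a,b]$. The only real obstacle is the algebraic bookkeeping in (iii); once one spots that inserting the definition of $u^{[2]}$ triggers the cancellations of the $G_{2}u^{[1]}$ and $(G_{1}G_{2}-s)u$ terms, the rest of the proof is routine.
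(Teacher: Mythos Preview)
Your proof is correct and is precisely the ``direct calculations'' that the paper invokes as its entire proof; the paper gives no further detail beyond that phrase, so your write-up in fact supplies what the paper omits. The key algebraic step---inserting the definition of $u^{[2]}$ into $\bigl((\varphi u)^{[1]}\bigr)'$ and tracking the cancellation of the $G_{2}u^{[1]}$ and $(G_{1}G_{2}-s)u$ terms---is carried out accurately, and your preliminary observation that $u'\in L^{2}_{loc}$ (needed for the $L^{2}_{comp}$ membership of $\varphi' u'$) is exactly the content of item~$4^{0}\,i)$ of Theorem~\ref{th_Properties}.
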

\begin{proof}
It can be proved by direct calculations.
\end{proof}

\begin{proof}[Proof of Theorem~\ref{th_MAccretivityMAIN_A}]
Let the assumptions of the theorem be satisfied. Without loss of generality, we assume that
\begin{equation}\label{eq_MA20}
\mathrm{Re}\,(\mathrm{L}_{00}u,u)_{L^2(\mathbb{R})}\geq (u,u)_{L^2(\mathbb{R})}\quad\text{for every}\quad u\in \mathrm{Dom}(\mathrm{L}_{00}),
\end{equation}
and that
\begin{equation}\label{eq_MA22}
\mathrm{Re}\,(v,\mathrm{L}_{00}^{+} v)_{L^2(\mathbb{R})}\geq (v,v)_{L^2(\mathbb{R})}\quad\text{for every}\quad v\in \mathrm{Dom}(\mathrm{L}_{00}^{+}).
\end{equation}

To prove that the minimal operator $\mathrm{L}_0$ is $m$-accretive, it suffices to show that the null space $\mathcal{N}\,(\mathrm{L}^{+})$ of the maximal operator $\mathrm{L}^{+}$ contains only the zero element, namely
\begin{equation*}
 \mathrm{dim}\,(\mathcal{N}\,(\mathrm{L}^{+}))=\mathrm{dim}\,(\mathrm{rank}\,\mathrm{L}_{0})^{\perp}=\mathrm{def}\,\mathrm{L}_{0}=0.
\end{equation*}
Then, an arbitrary point of the half-plane $\mathrm{Rm}\,\lambda<1$ is also regular for the operator $\mathrm{L}_0$ because the defect numbers of the closed operator are constant in each connected component of a set of points of regular type of this operator (see, for example, \cite{Glzmn1966} and \cite[Theorem~3.2]{Kato1995}).

Let $v\in L^{2}(\mathbb{R})$ be a solution of the equation
\begin{equation}\label{eq_MA24}
  \mathrm{L}^{+}v=0.
\end{equation}
Let us show that $v\equiv 0$.

\begin{lemmaEng}\label{lm_MainWork_Inq}
Let the general assumptions~\eqref{cond_Main} be satisfied, and let $v\in L^{2}(\mathbb{R})$ be a solution of the equation
\begin{equation*}
  \mathrm{L}^{+}v=0.
\end{equation*}
Then, for an arbitrary compactly supported function  $\varphi\in W_{2,comp}^{2}(\mathbb{R})$ with $\mathrm{Im}\,\varphi=0$ the following inequality holds:
\begin{equation}\label{eq_MA_MainWorkInq}
  \mathrm{Re}\,\left(\varphi v, \mathrm{L}_{00}^{+}\varphi v\right)_{L^{2}(\mathbb{R})}= \int_{\mathbb{R}}(\varphi')^{2}|v|^{2}d\,x+2\int_{\mathbb{R}}r_{1}\varphi'\varphi|v|^{2}d\,x \geq \int_{\mathbb{R}} \varphi^{2}|v|^{2}d\,x.
\end{equation}
\end{lemmaEng}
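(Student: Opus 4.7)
The plan is to split the claim into its equality part and its inequality part. The inequality
$$\mathrm{Re}\,(\varphi v,\mathrm{L}_{00}^{+}\varphi v)_{L^{2}(\mathbb{R})}\geq\int_{\mathbb{R}}\varphi^{2}|v|^{2}\,dx$$
follows at once from the accretivity condition \eqref{eq_MA22}: by Lemma~\ref{lm_phi_u} the product $\varphi v$ lies in $\mathrm{Dom}(\mathrm{L}_{00}^{+})$, and since $\varphi$ is real, $\|\varphi v\|_{L^{2}}^{2}=\int\varphi^{2}|v|^{2}\,dx$. The substantive task is therefore to establish the explicit identity for $\mathrm{Re}(\varphi v,\mathrm{L}_{00}^{+}\varphi v)$.

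For that, I would invoke Lemma~\ref{lm_phi_u}(iii), which gives
$$l^{+}[\varphi v]=\varphi\,l^{+}[v]-\varphi''v-2\varphi'v'+(\overline{G_{2}}-\overline{G_{1}})\,\varphi'v.$$
Since $\mathrm{L}^{+}v=0$, the first summand vanishes. A short calculation from $G_{1}=Q+ir$, $G_{2}=Q-ir$ yields $\overline{G_{2}}-\overline{G_{1}}=2i\overline{r}$. Forming $(\varphi v,l^{+}[\varphi v])_{L^{2}}$, conjugating inside the integrand, and using $\overline{\varphi}=\varphi$ produces three terms to handle.

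Taking real parts: the term $-\int\varphi\varphi''|v|^{2}\,dx$ is kept as is; the middle contribution $-2\,\mathrm{Re}\int\varphi\varphi'\,v\overline{v'}\,dx$ is rewritten via the pointwise identity $2\,\mathrm{Re}(v\overline{v'})=(|v|^{2})'$ (legitimate because $v\in\mathrm{Dom}(\mathrm{L}^{+})\subset W_{2,loc}^{1}(\mathbb{R})$ by Theorem~\ref{th_Properties}~$4^{0}$) followed by integration by parts against the compactly supported factor $\varphi\varphi'$, yielding $\int((\varphi')^{2}+\varphi\varphi'')|v|^{2}\,dx$ in which the $\varphi\varphi''$ piece cancels the first term. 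Finally, writing $r=r_{0}+ir_{1}$, the third contribution $-\mathrm{Re}\bigl(2i\int r\,\varphi\varphi'|v|^{2}\,dx\bigr)$ collapses to $2\int r_{1}\varphi\varphi'|v|^{2}\,dx$. Summing the three pieces reproduces the stated equality.

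The main obstacle is essentially bookkeeping: keeping the complex conjugations straight (especially the sign flip $\overline{G_{2}}-\overline{G_{1}}=2i\overline{r}$, as opposed to $G_{2}-G_{1}=-2ir$) and justifying the integration by parts without fuss. The latter is valid because $\varphi\varphi'$ is absolutely continuous of compact support and $|v|^{2}\in W_{1,loc}^{1}(\mathbb{R})$ with distributional derivative $v\overline{v'}+\overline{v}v'$, so no boundary contributions appear; no other global regularity assumption on $v$ is needed.
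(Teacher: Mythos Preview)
Your proof is correct and follows essentially the same route as the paper's: both invoke Lemma~\ref{lm_phi_u}(iii) with $l^{+}[v]=0$, form the inner product, integrate by parts once using the compact support of $\varphi$, and then take real parts (the paper integrates by parts the $-\varphi''\varphi|v|^{2}$ term first and takes real parts of the resulting expression $\int\varphi'\varphi(v'\overline{v}-v\overline{v}')\,dx-2i\int r\varphi'\varphi|v|^{2}\,dx$, whereas you take real parts first and integrate $-\int\varphi\varphi'(|v|^{2})'\,dx$ by parts---these are equivalent reorderings of the same computation). The inequality part via \eqref{eq_MA22} is handled identically.
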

\begin{proof}
Given a function $\varphi\in W_{2,comp}^{2}(\mathbb{R})$ with $\mathrm{Im}\,\varphi=0$ and taking into account that
\begin{equation*}
  \varphi v\in \mathrm{Dom}(\mathrm{L}_{00}^{+})\subset W_{2,comp}^{1}(\mathbb{R}),
\end{equation*}
and that $\mathrm{l}^{+}[v]=0$, we obtain by direct calculations (see Lemma~\ref{lm_phi_u}) the following:
\begin{align*}
  \left(\varphi v, \mathrm{L}_{00}^{+}\varphi v\right)_{L^{2}(\mathbb{R})} & =\left(\varphi v,\mathrm{l}^{+}[\varphi v]\right)_{L^{2}(\mathbb{R})}=\int_{\mathbb{R}}\left(-\varphi'' \varphi|v|^{2}-2\varphi'\varphi v \overline{v'}+\left(G_{2}-G_{1}\right)\varphi'\varphi|v|^{2} \right)d\,x.
\end{align*}
Then, applying the integration by parts formula to the first term (recall that the function $\varphi$ has compact support), we obtain
\begin{align*}
  \left(\varphi v, \mathrm{L}_{00}^{+}\varphi v\right)_{L^{2}(\mathbb{R})} & = \int_{\mathbb{R}}\left(\varphi' \varphi'|v|^{2}+\varphi'\varphi v'\overline{v}+\varphi'\varphi v\overline{v}'-2\varphi'\varphi v\overline{v'}+\left(G_{2}-G_{1}\right)\varphi'\varphi|v|^{2} \right)d\,x \\
   & =\int_{\mathbb{R}}\left((\varphi')^{2}|v|^{2}+ \varphi'\varphi(v'\overline{v}-v\overline{v}')+\left(G_{2}-G_{1}\right)\varphi'\varphi|v|^{2} \right)d\,x.
\end{align*}
Therefore, for any $\varphi\in W_{2,comp}^{2}(\mathbb{R})$ with $\mathrm{Im}\,\varphi=0$ and every solution $v$ of the equation $\mathrm{L}^{+}v=0$, we have
\begin{equation*}\label{eq_MA26}
  \left(\varphi v, \mathrm{L}_{00}^{+}\varphi v\right)_{L^{2}(\mathbb{R})} = \int_{\mathbb{R}}(\varphi')^{2}|v|^{2}d\,x+ \int_{\mathbb{R}}\varphi'\varphi(v'\overline{v}-v\overline{v}')d\,x-2i\int_{\mathbb{R}}r\varphi'\varphi|v|^{2}d\,x,
\end{equation*}
and (recall that $r=r_{0}+ir_{1}$)
\begin{align}\label{eq_MA28}
  \left(\varphi v, \mathrm{L}_{00}^{+}\varphi v\right)_{L^{2}(\mathbb{R})} & = \int_{\mathbb{R}}(\varphi')^{2}|v|^{2}d\,x+2\int_{\mathbb{R}} r_{1}\varphi'\varphi|v|^{2}d\,x \notag \\
  & +\int_{\mathbb{R}}\varphi'\varphi(v'\overline{v}-v\overline{v}')d\,x-2i\int_{\mathbb{R}} r_{0}\varphi'\varphi|v|^{2}d\,x.
\end{align}
Since
\begin{align*}
    \mathrm{Re}\,\int_{\mathbb{R}}\varphi'\varphi(v'\overline{v}-v\overline{v}')d\,x=0 \quad\text{and}\quad
    \mathrm{Re}\,2i\int_{\mathbb{R}} r_{0}\varphi'\varphi|v|^{2}d\,x=0,
\end{align*}
equality \eqref{eq_MA28} allows us to write condition \eqref{eq_MA22} in the required form
\begin{equation*}
  \mathrm{Re}\,\left(\varphi v, \mathrm{L}_{00}^{+}\varphi v\right)_{L^{2}(\mathbb{R})}= \int_{\mathbb{R}}(\varphi')^{2}|v|^{2}d\,x+2\int_{\mathbb{R}}r_{1}\varphi'\varphi|v|^{2}d\,x \geq \int_{\mathbb{R}} \varphi^{2}|v|^{2}d\,x.
\end{equation*}

\end{proof}

Let us consider the following sequence of intervals $\{\Delta_{n}\}_{n\in \mathbb{Z}}$:
\begin{equation*}
  \Delta_{0}:=[-1;1],\quad \Delta_{n}:=[n;n+1], \quad \Delta_{-n}:=[-n-1;-n]  \quad\text{for each}\quad n\in \mathbb{N}.
\end{equation*}
Let $\{\varphi_{n}\}_{n\in \mathbf{N}}$ be a sequence of real-valued functions from  $W_{2,comp}^{2}(\mathbb{R})$ that satisfy the following conditions:
\begin{itemize}
 \item [($a_{1}$)] $0\leq\varphi_{n}(x)\leq 1$, $x\in \mathbb{R}$;
 \item [($a_{2}$)] $\varphi_{n}(x)=1$, $x\in [-n;n]$;
 \item [($a_{3}$)] $\mathrm{supp}\,\varphi_{n}\subset [-n-1;n+1]$;
 \item [($a_{4}$)] $\varphi_{n}'(x)\geq 0$, $x\in \Delta_{-n}$ and $\varphi_{n}'(x)\leq 0$, $x\in \Delta_{n}$;
 \item [($a_{5}$)] there exists a $K>0$ such that
  \begin{equation*}
   |\varphi_{n}'(x)|\leq K \quad\text{for every}\quad x\in \mathbb{R}.
 \end{equation*}
\end{itemize}

We now substitute the functions $\varphi_{n}$ into \eqref{eq_MA_MainWorkInq} and note that
\begin{equation*}
  \mathrm{supp}\,\varphi_{n}'\subset \Delta_{-n}\cup \Delta_{n}, \quad n\in \mathbb{N}.
\end{equation*}
We then obtain
\begin{align*}
 \int\limits_{-n}^{n} |v|^{2}d\,x & \leq \int_{\mathbb{R}} \varphi_{n}^{2}|v|^{2}d\,x\leq \int_{\mathbb{R}}(\varphi_{n}')^{2}|v|^{2}d\,x +2\int_{\mathbb{R}}r_{1}\varphi_{n}'\varphi_{n}|v|^{2}d\,x \\
  & = \int\limits_{\Delta_{-n}\cup \Delta_{n}}(\varphi_{n}')^{2}|v|^{2}d\,x+2\int\limits_{\Delta_{-n}}r_{1}\varphi_{n}'\varphi_{n}|v|^{2}d\,x +2\int\limits_{\Delta_{n}}r_{1}\varphi_{n}'\varphi_{n}|v|^{2}d\,x \\
  & \leq K^{2}\int\limits_{\Delta_{-n}\cup \Delta_{n}}|v|^{2}d\,x+2\int\limits_{\Delta_{-n}}(r_{1}^{+}-r_{1}^{-})\varphi_{n}'\varphi_{n}|v|^{2}d\,x
   +2\int\limits_{\Delta_{n}}(r_{1}^{+}-r_{1}^{-})\varphi_{n}'\varphi_{n}|v|^{2}d\,x \\
  &  = K^{2}\int\limits_{\Delta_{-n}\cup \Delta_{n}}|v|^{2}d\,x +2\int\limits_{\Delta_{-n}}r_{1}^{+}\varphi_{n}'\varphi_{n}|v|^{2}d\,x-2\int\limits_{\Delta_{n}}r_{1}^{-}\varphi_{n}'\varphi_{n}|v|^{2}d\,x \\
  & -2\int\limits_{\Delta_{-n}}r_{1}^{-}\varphi_{n}'\varphi_{n}|v|^{2}d\,x+2\int\limits_{\Delta_{n}}r_{1}^{+}\varphi_{n}'\varphi_{n}|v|^{2}d\,x \\
  & \leq K^{2}\int\limits_{\Delta_{-n}\cup \Delta_{n}}|v|^{2}d\,x +2\int\limits_{\Delta_{-n}}r_{1}^{+}\varphi_{n}'\varphi_{n}|v|^{2}d\,x-2\int\limits_{\Delta_{n}}r_{1}^{-}\varphi_{n}'\varphi_{n}|v|^{2}d\,x.
\end{align*}
Here, we used
\begin{equation*}
    -2\int\limits_{\Delta_{-n}}r_{1}^{-}\varphi_{n}'\varphi_{n}|v|^{2}d\,x  \leq 0 \quad \text{and} \quad
    2\int\limits_{\Delta_{n}}r_{1}^{+}\varphi_{n}'\varphi_{n}|v|^{2}d\,x  \leq 0.
\end{equation*}
Thus,
\begin{equation}\label{eq_MA_MainWorkInq_1}
   \int\limits_{-n}^{n} |v|^{2}d\,x \leq  K^{2}\int\limits_{\Delta_{-n}\cup \Delta_{n}}|v|^{2}d\,x +2\int\limits_{\Delta_{-n}}r_{1}^{+}\varphi_{n}'|v|^{2}d\,x-2\int\limits_{\Delta_{n}}r_{1}^{-}\varphi_{n}'|v|^{2}d\,x.
\end{equation}


According to assumptions ($\mathrm{m}_{1}$) of Theorem~\ref{th_MAccretivityMAIN_A}, there exist numbers $N_{0} \in \mathbb{N}$ and $C>0$ such that
\begin{align}
  r_{1}^{+}(x) & \leq Cm(x) \quad\text{for every}\quad x\in (-\infty;-N_{0}], \label{eq_MA30_1} \\
  r_{1}^{-}(x) & \leq Cm(x) \quad\text{for every}\quad x\in [N_{0};\infty). \label{eq_MA30_2}
\end{align}
Then, taking into account estimates \eqref{eq_MA30_1} and \eqref{eq_MA30_2}, for $n\geq N_{0}$, we deduce from inequality \eqref{eq_MA_MainWorkInq_1}
\begin{align}\label{eq_MA_MainWorkInq_2}
  \int\limits_{-n}^{n} |v|^{2}d\,x & \leq  K^{2}\int\limits_{\Delta_{-n}\cup \Delta_{n}}|v|^{2}d\,x +2C\int\limits_{\Delta_{-n}}m\varphi_{n}'|v|^{2}d\,x-2C\int\limits_{\Delta_{n}}m\varphi_{n}'|v|^{2}d\,x.
\end{align}

Let us consider the function
\begin{equation*}
      \rho(x):=
  \left\{
    \begin{array}{rl}
      -\int\limits_{x}^{0}\frac{d\,s}{m(s)}, & x\in (-\infty;0], \\
      \int\limits_{0}^{x}\frac{d\,s}{m(s)}, & x\in [0;\infty);
    \end{array}
  \right.
\end{equation*}
which satisfies
\begin{equation*}
  \rho(x)\rightarrow -\infty \quad\text{as}\quad x\rightarrow -\infty, \quad\text{and}\quad \rho(x)\rightarrow +\infty \quad\text{as}\quad x\rightarrow +\infty,
\end{equation*}
in view of condition $(\mathrm{m}_{2})$ of Theorem~\ref{th_MAccretivityMAIN_A}. By condition $(\mathrm{m}_{1})$ of Theorem~\ref{th_MAccretivityMAIN_A}, the derivative has the properties
\begin{equation*}
\rho'(x)=\frac{1}{m(x)}>0\quad\text{for any}\quad x\in \mathbb{R},\quad\text{and}\quad \rho'\in W_{2,loc}^{1}(\mathbb{R}).
\end{equation*}
Then
\begin{equation}\label{eq_ChDer}
  \left(\varphi_{n}[\rho(x)]\right)'=\left(\varphi_{n}[\rho]\right)_{\rho}'(\rho(x))_{x}' =\left(\varphi_{n}[\rho]\right)_{\rho}'\frac{1}{m(x)},
\end{equation}
and
\begin{equation*}
  \varphi_{n}[\rho]\in W_{2,comp}^{2}(\mathbb{R}) \quad\text{for each}\quad n\in \mathbb{Z}.
\end{equation*}
Let us put
\begin{align*}
  \mathrm{X}_{n}^{-} & :=\{x\in \mathbb{R}  \left|\,\rho(x)\in\Delta_{-n}\right.\} \quad\text{for each}\quad n=0,1,2,\dots\,, \\
  \mathrm{X}_{n}^{+} & :=\{x\in \mathbb{R}  \left|\,\rho(x)\in \Delta_{n}\right.\} \quad\text{for each}\quad n=0,1,2,\dots\,,
\end{align*}
and
\begin{equation*}
   \mathrm{U}_{n}  :=\bigcup_{k=0}^{n-1}\mathrm{X}_{n}^{-}\cup\mathrm{X}_{n}^{+}.
\end{equation*}
Note that under condition $(\mathrm{m}_{2})$ of Theorem~\ref{th_MAccretivityMAIN_A}, the following relations hold:
\begin{align}
  \lim_{n\rightarrow-\infty}\sup\mathrm{X}_{n}^{-} & =-\infty, \label{eq_rho_1} \\
  \lim_{n\rightarrow\infty}\inf\mathrm{X}_{n}^{+} & =\infty, \label{eq_rho_2}
\end{align}
and
\begin{equation}\label{eq_rho_3}
   \lim_{n\rightarrow\infty}\mathrm{U}_{n} =\lim_{n\rightarrow\infty}\bigcup_{k=0}^{n-1}\mathrm{X}_{n}^{-}\cup\mathrm{X}_{n}^{+}=\mathbb{R}.
\end{equation}


Let us substitute the functions $\varphi_{n}(\rho(\cdot))$ with $n\in \mathbb{Z}$ into \eqref{eq_MA_MainWorkInq}. As when proving inequalities \eqref{eq_MA_MainWorkInq_1} and \eqref{eq_MA_MainWorkInq_2}, we infer by \eqref{eq_ChDer} with $n\geq N_{0}$ that
\begin{align*}
\int\limits_{\mathrm{U}_{n}} |v(x)|^{2}d\,x
& \leq  K^{2}\int\limits_{\mathrm{X}_{n}^{-}\cup\mathrm{X}_{n}^{+}}|v(x)|^{2}d\,x
+2\int\limits_{\mathrm{X}_{n}^{-}}m(x)(\varphi_{n}[\rho(x)])'|v(x)|^{2}d\,x
 -2\int\limits_{\mathrm{X}_{n}^{+}}m(x)(\varphi_{n}[\rho(x)])'|v(x)|^{2}d\,x \\
& \leq  K^{2}\int\limits_{\mathrm{X}_{n}^{-}\cup\mathrm{X}_{n}^{+}}|v(x)|^{2}d\,x
 +2C\int\limits_{\mathrm{X}_{n}^{-}}m(x)(\varphi_{n}[\rho])_{\rho}'\rho'(x)|v(x)|^{2}d\,x
 -2C\int\limits_{\mathrm{X}_{n}^{+}}m(x)(\varphi_{n}[\rho])_{\rho}'\rho'(x)|v(x)|^{2}d\,x \\
& \leq  K^{2}\int\limits_{\mathrm{X}_{n}^{-}\cup\mathrm{X}_{n}^{+}}|v(x)|^{2}d\,x
 +2CK\int\limits_{\mathrm{X}_{n}^{-}}|v(x)|^{2}d\,x  +2CK\int\limits_{\mathrm{X}_{n}^{+}}|v(x)|^{2}d\,x.
\end{align*}
Thus,
\begin{equation}\label{eq_MA_MainWorkInq_3}
  \int\limits_{\mathrm{U}_{n}} |v(x)|^{2}d\,x \leq  K^{2}\int\limits_{\mathrm{X}_{n}^{-}\cup\mathrm{X}_{n}^{+}}|v(x)|^{2}d\,x
  +2CK\int\limits_{\mathrm{X}_{n}^{-}\cup \mathrm{X}_{n}^{+}}|v(x)|^{2}d\,x \quad\text{whenever}\quad n\geq N_{0}.
\end{equation}
Let us now pass to the limit as $n\rightarrow\infty$ in inequality~\eqref{eq_MA_MainWorkInq_3}. Since $v\in L^{2}(\mathbb{R})$ and because relations \eqref{eq_rho_1} -- \eqref{eq_rho_3} hold, we obtain
\begin{equation*}
  \int\limits_{\mathrm{X}_{n}^{-}\cup \mathrm{X}_{n}^{+}}|v(x)|^{2}d\,x \rightarrow 0 \quad\text{as}\quad n\rightarrow\infty,
\end{equation*}
and, finally, conclude that
\begin{equation*}
  \int\limits_{\mathbb{R}}|v(x)|^{2}d\,x\leq 0, \qquad\text{i.e.,}\qquad v(x)\equiv 0.
\end{equation*}


The proof is complete.
\end{proof}

\section{Interval-type theorem}\label{sec:IntTpTh}
Since functions of class $W_{2,loc}^{1}(\mathbb{R})$ are continuous on the entire axis, then they are locally bounded. Therefore, it follows from condition $(\mathrm{m}_{1})$ of  Theorem~\ref{th_MAccretivityMAIN_A} that the function $r_{1}$ belongs to $L_{loc}^{\infty}(\mathbb{R})$ and, unlike the function $r_{0}$, cannot have local singularities. To include a wider class of admissible functions $r_{1}$, we now present an alternative $m$-accretivity criterion for minimal operators. These conditions are imposed only on the behaviour of the function $r_{1}$. They concern only sequences of intervals in neighborhoods of $\pm\infty$, and there are no additional conditions outside these intervals.

Self-adjointness conditions of this type for Schr\"{o}dinger operators of form \eqref{fde_Shr} with $r\equiv 0$ and with a real continuous potential $q$ on $\mathbb{R}$ were first established in Ismagilov's paper \cite{Ismgl1963}. They have the following form. Let $\Delta_{n}$ be a sequence of disjoint intervals of length $|\Delta_{n}|$ with $n\in \mathbb{Z}$, and let $\Delta_{n}$ tend to $\pm\infty$ as $n\rightarrow\pm\infty$. If
\begin{equation*}
q(x)\geq -\gamma_{n}\;(\gamma_{n}>0)\quad\text{for every}\quad x\in \Delta_{n} \quad\text{and every}\quad n\in \mathbb{Z},
\end{equation*}
and if
\begin{equation*}
\Sigma_{n=-\infty}^{-1}(\gamma_{n}+|\Delta_{n}|^{-2})^{-1}=\Sigma_{n=1}^{\infty}(\gamma_{n}+|\Delta_{n}|^{-2})^{-1}=\infty,
\end{equation*}
then the minimal symmetric operator $\mathrm{L}_{0}$ is self-adjoint.

These results were further developed in \cite{Orchk1974} and the publications cited therein.

The main result of this section is the following statement.
\begin{theoremEng}\label{th_MAccretivityMAIN_B}
Let the coefficients of the formal differential Schr\"{o}dinger expression \eqref{fde_Shr} satisfy conditions \eqref{cond_Main}, and let the pre-minimal operators $\mathrm{L}_{00}$ and $\mathrm{L}_{00}^{+}$ be accretive. If there exists a sequence of intervals $\Delta_{n}:=[a_{n};b_{n}]$ with $n\in \mathbb{Z}$ such that
\begin{equation*}
  -\infty< a_{n}<b_{n}<\infty,
  \quad b_{n}\rightarrow -\infty\quad\text{as}\quad n\rightarrow -\infty,\quad\text{and}\quad a_{n}\rightarrow \infty\quad\text{as}\quad n\rightarrow\infty,
\end{equation*}
and if
\begin{enumerate}
  \item [($\mathrm{B}_{1}$)] there exists a number $\delta>0$ such that $|\Delta_{n}|\geq \delta$ for each $n\in \mathbb{N}$,
  \item [($\mathrm{B}_{2}$)] there exists a number $C>0$ such that
  \begin{align*}
    r_{1}^{+}(x) & \leq C|\Delta_{-n}|\quad\text{for every}\quad x\in \Delta_{-n},\quad\text{whatever}\quad n\in \mathbb{N},  \\
    r_{1}^{-}(x) & \leq C|\Delta_{n}|\quad\text{for every}\quad x\in \Delta_{n},\quad\text{whatever}\quad n\in \mathbb{N};
  \end{align*}
\end{enumerate}
then the minimal operators $\mathrm{L}_{0}$ and $\mathrm{L}_{0}^{+}$ are $m$-accretive, i. e., $\mathrm{L}_{0}=\mathrm{L}$ and $\mathrm{L}_{0}^{+}=\mathrm{L}^{+}$.
\end{theoremEng}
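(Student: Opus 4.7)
The plan is to adapt the strategy from the proof of Theorem~\ref{th_MAccretivityMAIN_A} to the interval geometry of the present hypothesis. As before, it suffices to show $\mathcal{N}(\mathrm{L}^{+})=\{0\}$; the accretivity of $\mathrm{L}_{00}^{+}$ (WLOG shifted so that $\mathrm{Re}\,(v,\mathrm{L}_{00}^{+}v)\geq (v,v)$) combined with this vanishing of the defect number forces $m$-accretivity of $\mathrm{L}_{0}$ and $\mathrm{L}_{0}=\mathrm{L}$, and the parallel argument with $\mathrm{L}_{00}$ in place of $\mathrm{L}_{00}^{+}$ yields $\mathrm{L}_{0}^{*}=\mathrm{L}^{+}$. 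Let $v\in L^{2}(\mathbb{R})$ solve $\mathrm{L}^{+}v=0$. Lemma~\ref{lm_MainWork_Inq} remains applicable verbatim, so for every real-valued $\varphi\in W_{2,comp}^{2}(\mathbb{R})$
\[
\int_{\mathbb{R}}\varphi^{2}|v|^{2}\,dx\;\leq\; \int_{\mathbb{R}}(\varphi')^{2}|v|^{2}\,dx+2\int_{\mathbb{R}}r_{1}\varphi'\varphi|v|^{2}\,dx.
\]

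Instead of the cutoffs on $[-n;n]$ used in Theorem~\ref{th_MAccretivityMAIN_A}, I would tailor $\varphi_{n}$ to the intervals $\Delta_{n}=[a_{n};b_{n}]$. Fix a single $C^{\infty}$ template $\psi:\mathbb{R}\rightarrow [0;1]$ with $\psi\equiv 0$ on $(-\infty;0]$, $\psi\equiv 1$ on $[1;\infty)$, $\psi$ nondecreasing, and set $M:=\sup|\psi'|$. For each $n\in \mathbb{N}$ define
\[
\varphi_{n}(x):=\begin{cases} \psi\bigl((x-a_{-n})/|\Delta_{-n}|\bigr), & x\leq b_{-n},\\ 1, & b_{-n}\leq x\leq a_{n},\\ \psi\bigl((b_{n}-x)/|\Delta_{n}|\bigr), & x\geq a_{n}. \end{cases}
\]
Then $\varphi_{n}\in W_{2,comp}^{2}(\mathbb{R})$, $\mathrm{supp}\,\varphi_{n}\subset [a_{-n};b_{n}]$, $\varphi_{n}'\geq 0$ on $\Delta_{-n}$, $\varphi_{n}'\leq 0$ on $\Delta_{n}$, and $|\varphi_{n}'(x)|\leq M/|\Delta_{\pm n}|$ on $\Delta_{\pm n}$. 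By hypothesis $(\mathrm{B}_{1})$, $(\varphi_{n}')^{2}\leq M^{2}/\delta^{2}$ uniformly on the support of $\varphi_{n}'$.

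Substituting $\varphi_{n}$ into the displayed inequality and splitting the $r_{1}$-term by the sign of $\varphi_{n}'$ on each subinterval one uses $r_{1}\varphi_{n}'\varphi_{n}\leq r_{1}^{+}\varphi_{n}'\varphi_{n}$ on $\Delta_{-n}$ and $r_{1}\varphi_{n}'\varphi_{n}\leq r_{1}^{-}|\varphi_{n}'|\varphi_{n}$ on $\Delta_{n}$. Hypothesis $(\mathrm{B}_{2})$ combined with the derivative bound then gives $r_{1}^{\pm}(x)|\varphi_{n}'(x)|\leq CM$ on the respective intervals, providing the cancellation that the change of variable $\rho$ provided in Theorem~\ref{th_MAccretivityMAIN_A}. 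Since $\varphi_{n}\equiv 1$ on $[b_{-n};a_{n}]$ and $\varphi_{n}\leq 1$ elsewhere, this yields
\[
\int_{b_{-n}}^{a_{n}}|v|^{2}\,dx\;\leq\; \Bigl(\tfrac{M^{2}}{\delta^{2}}+2CM\Bigr)\int_{\Delta_{-n}\cup \Delta_{n}}|v|^{2}\,dx.
\]
Letting $n\rightarrow \infty$, the left-hand side tends to $\|v\|_{L^{2}(\mathbb{R})}^{2}$ since $b_{-n}\rightarrow -\infty$ and $a_{n}\rightarrow \infty$, while the right-hand side tends to $0$ because $v\in L^{2}(\mathbb{R})$ and $\Delta_{\pm n}$ escape to infinity. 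Hence $v\equiv 0$.

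The main obstacle is simultaneously achieving $W_{2,comp}^{2}$-regularity for $\varphi_{n}$ (so that $\varphi_{n}v\in \mathrm{Dom}(\mathrm{L}_{00}^{+})$ by Lemma~\ref{lm_phi_u} and Lemma~\ref{lm_MainWork_Inq} applies) and the scale-invariant derivative bound $|\varphi_{n}'|\leq M/|\Delta_{\pm n}|$ that exactly cancels the growth permitted in $(\mathrm{B}_{2})$; a naive piecewise-linear ramp has the correct slope but fails at the kinks, whereas the affine rescaling of the fixed smooth template $\psi$ produces the required bound uniformly in $n$, regardless of how large $|\Delta_{\pm n}|$ may become beyond the lower bound $\delta$.
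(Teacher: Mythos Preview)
Your proof is correct and follows essentially the same route as the paper's own argument: invoke Lemma~\ref{lm_MainWork_Inq}, construct cutoffs $\varphi_{n}$ that are $\equiv 1$ on $[b_{-n};a_{n}]$, ramp down on $\Delta_{\pm n}$ with $|\varphi_{n}'|\leq K/|\Delta_{\pm n}|$, discard the nonpositive pieces of the $r_{1}$-term by the sign of $\varphi_{n}'$, and use $(\mathrm{B}_{2})$ together with the scaled derivative bound to get a constant times $\int_{\Delta_{-n}\cup\Delta_{n}}|v|^{2}$ on the right, which vanishes as $n\to\infty$. Your explicit template construction via $\psi$ is a slightly more concrete realisation of the same family the paper merely postulates; the only cosmetic omission is the tacit ``for $n$ large enough'' so that $b_{-n}<a_{n}$ and the pieces fit together, which the paper handles by declaring the $\Delta_{n}$ disjoint with $b_{-1}<0<a_{1}$ without loss of generality.
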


\begin{remarkEng}\label{cor_ThB_10}
It follows from Theorem~\ref{th_MAccretivityMAIN_B} that if the assumptions given therein are fulfilled for the function $r_{1}$ and if
\begin{equation*}
  (\mathrm{L}_{00}u,u)\in \Lambda\quad\text{for every}\quad u \in \mathrm{Dom}(\mathrm{L}_{00}), \quad\text{and}\quad (\mathrm{L}_{00}^{+}v,v)\in \overline{\Lambda}\quad\text{for every}\quad\quad v \in \mathrm{Dom}(\mathrm{L}_{00}^{+}),
\end{equation*}
then $\rho(\mathrm{L}_{0})\supset\mathbb{C}\setminus \Lambda$ and $\rho(\mathrm{L}_{0}^{+})\supset\mathbb{C}\setminus \overline{\Lambda}$, where $\Lambda$ is a closed sector of the half-plane $\mathrm{Re}\,\lambda \geq 0$ with vertex at the point~$0$.
\end{remarkEng}


\begin{proof}[Proof of Theorem~\ref{th_MAccretivityMAIN_B}]
As in the proof of Theorem~\ref{th_MAccretivityMAIN_A}, let $v\in L^{2}(\mathbb{R})$ be a solution of the equation $\mathrm{L}^{+}v=0$. We show that $v\equiv 0$. Without loss of generality, we assume that the intervals $\Delta_{n}=[a_{n};b_{n}]$ with $n\in \mathbb{Z}$ are mutually disjoint, and that $b_{-1}<0$ and $a_{1}>0$. Let $\{\varphi_{n}\}_{n\in \mathbf{N}}$ be a sequence of real-valued compactly supported functions from $W_{2,comp}^{2}(\mathbb{R})$ that satisfy the following conditions:
\begin{itemize}
 \item [($a_{1}$)] $0\leq\varphi_{n}(x)\leq 1$ for every $x\in \mathbb{R}$;
 \item [($a_{2}$)] $\varphi_{n}(x)=1$ for every $x\in [b_{-n};a_{n}]$;
 \item [($a_{3}$)] $\mathrm{supp}\,\varphi_{n}\subset [a_{-n};b_{n}]$;
 \item [($a_{4}$)] $\varphi_{n}'(x)\geq 0$ for any $x\in [a_{-n};b_{-n}]$, and $\varphi_{n}'(x)\leq 0$ for any $x\in [a_{n};b_{n}]$;
 \item [($a_{5}$)] for some number $K>0$ the inequalities hold
  \begin{equation*}
   |\varphi_{n}'(x)|\leq
\begin{cases}
K|\Delta_{-n}|^{-1}\quad & \text{if}\quad x<0, \\
K|\Delta_{n}|^{-1}\quad & \text{if}\quad x>0.
\end{cases}
 \end{equation*}
\end{itemize}
Then, by \eqref{eq_MA_MainWorkInq} and by assumptions ($\mathrm{B}_{1}$) and ($\mathrm{B}_{2}$) of the theorem, and since $\mathrm{supp}\,\varphi_{n}'\subset \Delta_{-n}\cup \Delta_{n}$ for each $n\in \mathbb{N}$, we obtain the following:
\begin{align*}
 \int_{b_{-n}}^{a_{n}} |v|^{2}d\,x & \leq \int_{\mathbb{R}} \varphi_{n}^{2}|v|^{2}d\,x\leq \int_{\mathbb{R}}(\varphi_{n}')^{2}|v|^{2}d\,x +2\int_{\mathbb{R}}r_{1}\varphi_{n}'\varphi_{n}|v|^{2}d\,x \\
  & = \int\limits_{\Delta_{-n}\cup \Delta_{n}}(\varphi_{n}')^{2}|v|^{2}d\,x+2\int\limits_{\Delta_{-n}}r_{1}\varphi_{n}'\varphi_{n}|v|^{2}d\,x +2\int\limits_{\Delta_{n}}r_{1}\varphi_{n}'\varphi_{n}|v|^{2}d\,x \\
  & \leq K^{2}\delta^{2}\int\limits_{\Delta_{-n}\cup \Delta_{n}}|v|^{2}d\,x+2\int\limits_{\Delta_{-n}}(r_{1}^{+}-r_{1}^{-})\varphi_{n}'\varphi_{n}|v|^{2}d\,x
   +2\int\limits_{\Delta_{n}}(r_{1}^{+}-r_{1}^{-})\varphi_{n}'\varphi_{n}|v|^{2}d\,x \\
  &  = K^{2}\delta^{2}\int\limits_{\Delta_{-n}\cup \Delta_{n}}|v|^{2}d\,x +2\int\limits_{\Delta_{-n}}r_{1}^{+}\varphi_{n}'\varphi_{n}|v|^{2}d\,x-2\int\limits_{\Delta_{n}}r_{1}^{-}\varphi_{n}'\varphi_{n}|v|^{2}d\,x \\
  & -2\int\limits_{\Delta_{-n}}r_{1}^{-}\varphi_{n}'\varphi_{n}|v|^{2}d\,x+2\int\limits_{\Delta_{n}}r_{1}^{+}\varphi_{n}'\varphi_{n}|v|^{2}d\,x \\
  & \leq K^{2}\delta^{2}\int\limits_{\Delta_{-n}\cup \Delta_{n}}|v|^{2}d\,x +2\int\limits_{\Delta_{-n}}r_{1}^{+}\varphi_{n}'\varphi_{n}|v|^{2}d\,x-2\int\limits_{\Delta_{n}}r_{1}^{-}\varphi_{n}'\varphi_{n}|v|^{2}d\,x \\
  & \leq K^{2}\delta^{2}\int\limits_{\Delta_{-n}\cup \Delta_{n}}|v|^{2}d\,x+2CK\int\limits_{\Delta_{-n}}|v|^{2}d\,x+2CK\int\limits_{\Delta_{n}}|v|^{2}d\,x \\
  & = K^{2}\delta^{2}\int\limits_{\Delta_{-n}\cup \Delta_{n}}|v|^{2}d\,x+2CK\int\limits_{\Delta_{-n}\cup \Delta_{n}}|v|^{2}d\,x.
\end{align*}
Here, we use the inequalities
\begin{equation*}
    -2\int\limits_{\Delta_{-n}}r_{1}^{-}\varphi_{n}'\varphi_{n}|v|^{2}d\,x  \leq 0 \quad\text{and}\quad
    2\int\limits_{\Delta_{n}}r_{1}^{+}\varphi_{n}'\varphi_{n}|v|^{2}d\,x  \leq 0.
\end{equation*}
Therefore, we have
\begin{align}\label{eq_MA50}
  \int_{a_{-n}}^{b_{n}} |v|^{2}d\,x & \leq K^{2}\delta^{2}\int\limits_{\Delta_{-n}\cup \Delta_{n}}|v|^{2}d\,x +2CK\int\limits_{\Delta_{-n}\cup \Delta_{n}}|v|^{2}d\,x \quad\text{for each}\quad n\in \mathbb{N}.
\end{align}
Passing in \eqref{eq_MA50} to the limit as $n\rightarrow\infty$, we infer that $v\equiv 0$ because $v\in L^{2}(\mathbb{R})$.

The proof is complete.

\end{proof}

\begin{appendices}



\section{Appendix}\label{sec:AppA}

We present some additional results on accretive differential operators in the Hilbert space $L^{2}(\mathbb{R})$
\begin{equation*}
  \mathrm{L}_{00}u=-u''+au'+bu, \qquad u\in C_{comp}^{\infty}(\mathbb{R})
\end{equation*}
with regular complex-valued coefficients $a\in W_{2,loc}^{1}(\mathbb{R})$ and $b\in L_{loc}^{2}(\mathbb{R})$. Theorems \ref{th_MAccretivityMAIN_A} and \ref{th_MAccretivityMAIN_B} of this paper are new for such operators as well.

The following theorem gives necessary and sufficient conditions for the accretivity of the operator $\mathrm{L}_{0}$, as well as the operator $\mathrm{L}_{00}^{+}$, which is defined on smooth functions with compact support. It is a special case of Theorem 2.3 of \cite{MzVrbActaMathSin2019}, which studies a wider class of operators.
\begin{theoremEng}\label{th4}
The following statements hold: \\
\noindent (i) If there exists a function $f\in L_{loc}^{2}(\mathbb{R})$ such that
\begin{equation}\label{eq_A10}
  \mathrm{Re}\,b-\frac{1}{2}\mathrm{Re}\,a'+\frac{1}{4}(\mathrm{Im}\,a)^{2}\geq f'+f^{2} \quad\text{in}\quad \mathcal{D}'(\mathbb{R}),
\end{equation}
then the operator $\mathrm{L}_{00}$ is accretive.\\
\noindent (ii) On the contrary, if the operator $\mathrm{L}_{00}$ is accretive, then there exists a function $f\in L_{loc}^{2}(\mathbb{R})$ such that inequality~\ref{eq_A10} holds.
\end{theoremEng}

\begin{corollaryEng}\label{cr_th4}
The operator $\mathrm{L}_{00}$ is accretive if
\begin{equation*}
  \mathrm{Re}\,b-\frac{1}{2}\mathrm{Re}\,a'-\frac{1}{4}(\mathrm{Im}\,a)^{2}\geq 0 \quad\text{a. e}.
\end{equation*}
\end{corollaryEng}
This corollary admits a simple direct proof.

Because the coefficients are distributions, classical derivatives may not exist. In general, $\operatorname{Dom}(L_{00}) \cap C^1(\mathbb{R}) = \{0\}$, which necessitates the use of quasi-derivatives as defined in Section~\ref{sec:SO}.


As established by the authors in \cite{MkhMlb2013}, if $a=0$ and the operator $\mathrm{L}_{00}$ is accretive, then its closure $\mathrm{L}_{0}=\mathrm{L}$ is an $m$-accretive operator. The following statement shows that even in the case of smooth real-valued coefficients $a$, $b$ and $a\neq 0$ this is, in general, not the case.

\begin{theoremEng}\label{th5}
Let $v\in L^{2}(\mathbb{R})$ be a positive smooth function. Then there exist smooth real-valued coefficients $a$ and $b$ such that \\
\noindent (i) the operator $\mathrm{L}_{00}$ (and hence $\mathrm{L}_{0}$) is accretive; \\

\noindent (ii) the operator $\mathrm{L}_{0}$ is not $m$-accretive, because $\mathrm{L}_{0}^{\ast}v+v=0$.
\end{theoremEng}
\begin{proof}
Let the function $v$ satisfy the conditions of the theorem. If these functions are smooth and real-valued, then
\begin{equation*}
 \mathrm{Re}(\mathrm{L}_{00}u,u)=\int\limits_{\mathbb{R}}|u'|^{2}dx+\int\limits_{\mathbb{R}}\left(b-\frac{a'}{2}\right)|u|^{2}dx, \qquad u\in C_{comp}^{\infty}(\mathbb{R}).
\end{equation*}
Therefore, for the operator $\mathrm{L}_{00}$ to be accretive, it is sufficient to satisfy the inequality
\begin{equation*}
  b-\frac{a'}{2}=w\geqslant 0 \quad\text{a. e.}
\end{equation*}
Then the adjoint operator
\begin{equation*}
  \mathrm{L}_{00}^{\ast}=\mathrm{L}_{0}^{\ast}=-\frac{d^{2}}{dx^{2}}-a\frac{d}{dx}+(b-a').
\end{equation*}
Therefore, the equation $\mathrm{L}_{0}^{\ast}v+v=0$ is equivalent to the equality
\begin{equation*}
  -v''-av'+(b-a'+1)v=0.
\end{equation*}
This is a first-order linear differential equation with respect to the unknown function $a$.
Substituting here $b=w+\frac{a'}{2}$, where the function $w$ is smooth and non-negative, we have
\begin{equation*}
  -v''-av'+(w-\frac{a'}{2}+1)v=0,
\end{equation*}
or
\begin{equation*}
  \frac{a'}{2}+\frac{v'}{v}a=w+1-\frac{v''}{v}.
\end{equation*}
Hence, we have that
\begin{equation*}
  a(x)=v^{-2}(x)\left(C+2\int\limits_{0}^{x}(w(t)+1)v^{2}(t)-v(t)v''(t)dt\right),
\end{equation*}
where $C$ is an arbitrary real constant. Then $b=w+\frac{a'}{2}$.

Since $\mathrm{L}_{0}^{\ast}v=-v\neq 0$, the point $-1$ belongs to the point spectrum of $\mathrm{L}_{0}^{\ast}$, hence $\mathrm{L}_{0}$ is not $m$-accretive.
\end{proof}

\vspace{25pt}

\section{Appendix}\label{sec:AppB}

Theorem \ref{th_ApendixB} of this appendix demonstrates that the integral condition $\int (1/m) = \infty$ in Theorem \ref{th_MAccretivityMAIN_A} is sharp. We construct a class of smooth, real-valued coefficients of the operators where $\int (1/m) < \infty$ and show that these operators fail to be $m$-accretive.

\begin{theoremEng}\label{th_ApendixB}
Let the non-negative function $a\in C^{1}(\mathbb{R})$ satisfy the conditions:
\begin{itemize}
  \item [(i)] $\frac{a'}{a^{2}}\rightarrow 0$, $|x|\rightarrow \infty$;
  \item [(ii)] $\frac{a'}{a^{2}}$ has bounded variation on the set $\mathbb{R}\setminus(-c;c)$, $c>0$;
      \item[(iii)] $a \rightarrow \infty$, $|x|\to\infty$;
  \item [\rm(iv)] $a^{-1}\in L^{1}(\mathbb{R})$.
\end{itemize}
Closure $\mathrm{L}_0$ of the differential operator
\begin{equation*}
  \mathrm{L}_{00}u=-u''+a(x)u'+b(x)u, \quad b(x)=\frac{1}{2}a'(x),\quad u\in C_{comp}^{\infty}(\mathbb{R})
\end{equation*}
is an accretive but not $m$-accretive operator in a complex Hilbert space $L^{2}(\mathbb{R})$.
\end{theoremEng}
\begin{proof}
The accretiveness of the operator follows from the corollary~\ref{cr_th4} of Theorem~\ref{th4} and the equality $b(x)=\frac{1}{2}a'(x)$. We prove that under assumptions (i), (ii), (iii) and (iv) of the theorem it is not $m$-accretive. By the Lumer--Phillips theorem this is equivalent to the fact that $\mathrm{Ker}\left(\mathrm{L}_{0}^{\ast}+\mathrm{I}\right)\neq \{0\}$.

Standard integration by parts gives:
\begin{equation*}
  \mathrm{L}_{0}^{\ast}v=-v''-(av)'+bv=-v''-av'-\frac{1}{2}a'v.
\end{equation*}
Consider the equation $\left(\mathrm{L}_{0}^{\ast}+\mathrm{I}\right)v=0$:
\begin{equation}\label{eq_B10}
  -v''-av'+\left(1-\frac{1}{2}a'\right)v=0.
\end{equation}
We make a substitution
\begin{equation}\label{eq_B20}
  v{x}=e^{-A(x)/2}\varphi, \quad A(x)=\int_{0}^{x}a(t)d\,t.
\end{equation}
We calculate
\begin{align*}
  v' & =e^{-A/2}\left(-\frac{a}{2}\varphi+\varphi'\right) \\
  v'' & =e^{-A/2}\left[\left(\frac{a^{2}}{4}-\frac{a'}{2}\right)\varphi-a\varphi'+\varphi''\right].
\end{align*}
Substituting these equalities into \eqref{eq_B10} and reducing by $e^{-A/2}\neq 0$, we obtain the differential equation
\begin{equation}\label{eq_B30}
  -\varphi''+W(x)\varphi=0, \qquad W(x):=\frac{a^{2}}{4}+1.
\end{equation}
At the same time
\begin{equation}\label{eq_B40}
  v\in L^{2}(\mathbb{R}) \Leftrightarrow \int_{-\infty}^{\infty}e^{-A(x)}|\varphi(x)|^{2}d\,x<\infty.
\end{equation}
Equation~\eqref{eq_B30} has the form $\varphi''=W(x)\varphi$ with $W(x)>0$ if $|x|\gg 1$. The function $W(x)\rightarrow +\infty$ at $|x|\rightarrow\infty$. To apply Olver's theorem \cite[Ch.~6, Theorem~11.1]{OlvrASF1997} (see also \cite{Olvr1961}) we must verify its conditions. Lemma \ref{lm_AppendixB} guarantees these conditions. Consequently, the fundamental system of solutions $\{\varphi_+, \varphi_-\}$ has the asymptotic form
\begin{equation}\label{eq_B50}
  \varphi_{\pm}(x)=W^{-1/4}(x)\exp\left(\pm \int_{c}^{x}W^{1/2}(t)d\,t\right)(1+\eta_{\pm}(x)),
\end{equation}
where $\eta_{\pm}(x)\rightarrow 0$ at $x\rightarrow +\infty$ if the following conditions are holds:
\begin{align}
  & \frac{W'(x)}{W^{3/2}(x)}\rightarrow 0 \quad\text{as}\quad x\rightarrow +\infty\quad\text{and}\quad  \label{eq_B60}  \\
  & \text{the function }\frac{W'}{W^{3/2}}\text{has bounded variation on}\quad  [c;\infty). \label{eq_B70}
\end{align}
According to our Lemma~\ref{lm_AppendixB}, conditions \eqref{eq_B60}, \eqref{eq_B70} are satisfied if the conditions of Theorem~\ref{th_ApendixB} are fulfilled.

Besides
\begin{equation*}
  \sqrt{W(x)}=\sqrt{1+\frac{a^{2}(x)}{4}}=\frac{a(x)}{2}\sqrt{1+\frac{4}{a^{2}(x)}}=\frac{a(x)}{2} +\frac{1}{a(x)}+O\left(\frac{1}{a^{3}(x)}\right), \quad |x|\rightarrow \infty.
\end{equation*}
Since $a^{-1}\in L^{1}(\mathbb{R})$, and $a(x)\rightarrow +\infty$, we also have
\begin{equation*}
  \frac{1}{a^{3}(x)} \in L^{1}(\mathbb{R}).
\end{equation*}
Therefore, there exist constants $C_{\pm}$ such that
\begin{equation}\label{eq_B80}
  \int_{0}^{x} \sqrt{W(t)}d\,t=\frac{A(x)}{2}+C_{\pm}+o(1), \quad |x|\rightarrow \infty.
\end{equation}
Let us investigate the $L^{2}$-integrability of the solutions of the differential equation \eqref{eq_B10} on the right half-axis, based on the formula \eqref{eq_B50}. Returning to the function $v$ by the formula \eqref{eq_B20}, we have
\begin{equation*}
  v_{\pm}(x)=e^{-A(x/2)}\varphi_{\pm}.
\end{equation*}
For the decaying solution, it follows from the formulas \eqref{eq_B40}, \eqref{eq_B50} that
\begin{equation*}
  |v_{-}(x)|^{2}\asymp e^{-A(x)}W^{-1/2}(x)\exp\left(-2\int_{0}^{x} \sqrt{W(t)}d\,t\right)\asymp W^{-1/2}(x)e^{-2A(x)}.
\end{equation*}
This expression approaches zero faster than exponentially.

For an increasing solution, we have
\begin{equation*}
  |v_{+}(x)|^{2}\asymp e^{-A(x)}W^{-1/2}(x)\exp\left(2\int_{0}^{x} \sqrt{W(t)}d\,t\right)\asymp W^{-1/2}(x)e^{2C_{+}}.
\end{equation*}
Since
\begin{equation*}
  W^{-1/2}(x)\asymp \frac{2}{a(x)},
\end{equation*}
we have
\begin{equation*}
  |v_{+}(x)|^{2}\leq \frac{c}{a(x)}.
\end{equation*}
From which, by virtue of condition (iii) of the theorem, it follows that
\begin{equation*}
  v_{+}\in L^{2}(d;+\infty).
\end{equation*}
Therefore, both fundamental solutions belong to the space $L^{2}$ on the right half-axis (the Weyl limit-circle case at $+\infty$).

Let us investigate the $L^{2}$-integrability of the solutions on the left half-axis $(-\infty;0)$.

Let us recall the connection between the solutions of the equation $(\mathrm{L}_{0}^{\ast}+\mathrm{I})v=0$ and \eqref{eq_B30}:
\begin{equation*}
  v{x}=e^{-A(x)/2}\varphi, \quad\text{where}\quad A(x)=\int_{0}^{x}a(t)d\,t.
\end{equation*}
Since the function $a$ is non-negative, then for $x<0$ we have $A(x)\leq 0$, and $A(x)\rightarrow -\infty$ for $x\rightarrow -\infty$. The condition $v\in L^{2}(-\infty;0)$ is equivalent to the following
\begin{equation*}
  \int_{-\infty}^{0}e^{-A(x)}|\varphi(x)|^{2}d\,x<\infty.
\end{equation*}
Let us apply Olver's theorem to study the asymptotics as $x\rightarrow -\infty$. For convenience, we will write the integral in the exponent of the WKB approximation from $x$ to $0$. Since
\begin{equation*}
  \sqrt{W(x)}=\frac{a(x)}{2} +\frac{1}{a(x)}+O\left(\frac{1}{a^{3}(x)}\right),
\end{equation*}
we obtain
\begin{equation*}
  \int_{x}^{0} \sqrt{W(t)}d\,t
  =\int_{x}^{0}\left(\frac{a(t)}{2} +\frac{1}{a(t)}+O\left(a^{-3}(t)\right)\right)d\,t
  =-\frac{A(x)}{2}+C_{-}+o(1), \quad x\rightarrow -\infty,
\end{equation*}
where we used the fact that $\int_{x}^{0} a(t)d\,t=-A(x)$, and the integral $\int_{-\infty}^{0} \frac{d\,t}{a(t)}$ converges due to the condition $a^{-1}\in L^{1}(\mathbb{R})$, forming the constant $C_{-}$.

Then the fundamental system of solutions $\varphi_{1}(x)$ and $\varphi_{2}(x)$ at $x\rightarrow -\infty$ has asymptotic
\begin{align*}
\varphi_{1}(x) & \asymp W^{-1/4}(x)\exp\left(\int_{x}^{0} \sqrt{W(t)}d\,t\right)\asymp W^{-1/4}(x) e^{-A(x)/2}e^{C_{-}}, \\
\varphi_{2}(x) & \asymp W^{-1/4}(x)\exp\left(-\int_{x}^{0} \sqrt{W(t)}d\,t\right)\asymp W^{-1/4}(x)e^{A(x)/2}e^{-C_{-}}.
\end{align*}
Let us check whether the corresponding solutions $v_{1}(x)$ and $v_{2}(x)$ belong to the space $L^{2}(-\infty;0)$
\begin{equation*}
  |v_{1}(x)|^{2}=e^{-A(x)}|\varphi_{1}(x)|^{2}\asymp e^{-A(x)}W^{-1/2}(x)e^{-A(x)}=W^{-1/2}(x)e^{-2A(x)}.
\end{equation*}
Since $a(x)\rightarrow +\infty$, then $W^{-1/2}(x)\rightarrow 0$. However, $A(x)\rightarrow -\infty$, so the factor $e^{-A(x)}$ approaches $+\infty$ exponentially. Therefore, the solution is $v_{1}\notin L^{2}(-\infty;0)$.

For the second solution we have as $x\rightarrow -\infty$
\begin{equation*}
  |v_{2}(x)|^{2}=e^{-A(x)}|\varphi_{2}(x)|^{2}\asymp e^{-A(x)}W^{-1/2}(x)e^{A(x)}=W^{-1/2}(x).
\end{equation*}
Since for $x\rightarrow -\infty$ we have $W^{-1/2}(x)\asymp \frac{2}{a(x)}$, and by the condition of the theorem $a^{-1}\in L^{1}(\mathbb{R})$, we obtain that $v_{2}\in L^{2}(-\infty;0)$.

On the left half-axis $(-\infty;0)$ there is only one linearly independent solution of the equation $\left(\mathrm{L}_{0}^{\ast}+\mathrm{I}\right)v=0$ belonging to the space $L^{2}$. This means that the differential operator exhibits the Weyl limit point case at $-\infty$. Since on the right half-axis the case of a limit circle is realized, then on the entire axis $\mathbb{R}$ the equation has exactly one linearly independent $L^{2}$-solution. Thus, $\mathrm{dim}\,\mathrm{Ker}\left(\mathrm{L}_{0}^{\ast}+\mathrm{I}\right)=1$. The proof is   complete.
\end{proof}

Let us now present the technical lemma that we used in proving Theorem \ref{th_ApendixB}.

\begin{lemmaEng}\label{lm_AppendixB} Let the non-negative function $a \in C^2(\mathbb{R})$ satisfy the conditions:
\begin{itemize}
\item[(i)] $\displaystyle \frac{a'}{a^2} \to 0$ as $|x|\to\infty$;
\item[(ii)] the function $\displaystyle \frac{a'}{a^2}$ has bounded variation on the set $\mathbb{R}\setminus(-c,c)$ for some $c>0$;
\item[(iii)] $a\to +\infty$ as $|x|\to\infty$.
\end{itemize}
Then the function $W=\frac14 a^2+1$ satisfies the following properties:
\begin{itemize}
\item[(*)] $\displaystyle \frac{W'}{W^{3/2}}\to 0$ as $|x|\to\infty$;
\item[(**)] the function $\displaystyle \frac{W'}{W^{3/2}}$ has bounded variation on the set $\mathbb{R}\setminus(-c',c')$ for some $c'>0$.
\end{itemize}
\end{lemmaEng}

\begin{proof} Set
\begin{equation}\label{B.8}
\gamma(x):=a^2(x)+4,\qquad f(x):=2\frac{a'(x)}{a^2(x)} =\frac{(a^2(x))'}{a^3(x)}=\frac{\gamma'(x)}{(\gamma(x)-4)^{3/2}}.
\end{equation}
Then $W'=\frac{\gamma'(x)}4$ and
\begin{equation}\label{B.9}
g(x):=\frac{W'(x)}{W^{3/2}(x)}=\frac{\gamma'(x)/4}{(\gamma(x)/4)^{3/2}}
=\frac{2\,\gamma'(x)}{\gamma^{3/2}(x)}.
\end{equation}
Condition (iii) implies the existence of $c_0>0$ such that $a(x)>0$ for all $|x|\ge c_0$. In the following, increasing $c$ as necessary, we assume that $a(x)>0$ and $\gamma(x)>4$ on $\mathbb{R}\setminus(-c,c)$.

From formulas \eqref{B.8} and \eqref{B.9} we have
\begin{equation}\label{B.10}
g(x)=f(x)\,\varphi(x),\qquad \varphi(x)=2\left(1-\frac{4}{\gamma(x)}\right)^{3/2}
=2\left(\frac{a^2(x)}{a^2(x)+4}\right)^{3/2}.
\end{equation}
Since $\gamma(x)\ge 4$ for $a \geq 0$, then $0\le \varphi(x)\le 2$, and $\varphi (x)\rightarrow 2$ for $|x|\to\infty$.

Let us prove the second statement of the theorem. Since $a \in C^2$ and $ a>0$ on $\mathbb{R}\setminus(-c,c)$, the functions $f$ and $g$ belong to the class $C^1$ on this set. Hence we find
\begin{equation}\label{B.11}
f'(x)=\frac{2\gamma''(x)(\gamma(x)-4)-3(\gamma'(x))^2}{2(\gamma(x)-4)^{5/2}},
\qquad
g'(x)=\frac{4\gamma''(x)\delta(x)-6(\gamma'(x))^2}{\gamma^{3/2}(x)}.
\end{equation}

Let us express the numerator $g'$ in terms of the numerator $f'$.

Since
\[
4\gamma''\gamma - 6(\gamma')^2 = \left[ 2\gamma''(\gamma-4) - 3(\gamma')^2 \right] + 16\gamma'' = 4(\gamma-4)^{5/2}f' + 16\gamma'',
\]
we obtain
\begin{equation}\label{B.12}
g' = 4\left(\frac{\gamma-4}{\gamma}\right)^{5/2} f' + \frac{16\gamma''}{\gamma^{3/2}}.
\end{equation}

We rewrite the formula for $\gamma''$ in a convenient form. From the equality $f = 2a'/a^2$ we have $a' = \frac12 f a^2$, whence
\[
a'' = \frac{a^2}{2}(f' + f^2), \quad
\gamma'' = (a^2)'' = 2(a')^2 + 2aa'' = a^3f+\frac32 f^2a^4.
\]

Therefore,
\begin{equation}\label{B.13}
g'(x) = 2\left(\frac{\gamma-4}{\gamma}\right)^{3/2} f'(x) + \frac{12(\gamma-4)^2}{\gamma^{5/2}} f^2(x).
\end{equation}

However, $0 \le 2\left(\frac{\gamma-4}{\gamma}\right)^{3/2} \le 2$, $0 \le \frac{12(\gamma-4)^2}{\gamma^{5/2}} \le 6$.

Hence, from (B.14) we obtain the estimate
\begin{equation}\label{B.14}
|g'(x)| \le 2|f'(x)| + 6f^2(x).
\end{equation}

From conditions (i)--(iii) of the theorem it follows that $f \in L^2(c,\infty)$. Indeed, from the equality $(2/a)' = -f$ we have $f^2 = -f(2/a)'$. Integrating by parts on $[c,R]$:
\begin{equation}\label{B.15}
\int_c^R f^2 dx = -\left[ \frac{2f}{a} \right]_c^R + \int_c^R \frac{2f'}{a} dx = -\frac{2f(R)}{a(R)} + \frac{2f(c)}{a(c)} + 2\int_c^R \frac{f'}{a} dx.
\end{equation}

However, $\frac{f(R)}{a(R)} \to 0$, $R \to +\infty$.

Let $m := \inf_{x \ge c} a(x)$. Since $a$ is continuous, positive on $[c,\infty)$ and tends to $\infty$, we have $m>0$. Therefore
\[
\int_c^\infty \frac{|f'|}{a} dx \le \frac1m \int_c^\infty |f'(x)| = \frac1m \operatorname{Var}_{[c,\infty)}(f) < \infty
\]
by condition (ii). Both components of the right-hand side of (B.16) are finite, and the left-hand side is an integral of a non-negative function, so the integral in the left-hand side of \eqref{B.15} is finite.

Integrating the inequality \eqref{B.14} on $[c,\infty)$:
\[
\operatorname{Var}_{[c,\infty)}(g) = \int_c^\infty |g'(x)| dx \le 2\int_c^\infty |f'(x)| dx + 6\int_c^\infty f^2(x) dx =
\]
\[
= 2\operatorname{Var}_{[c,\infty)}(f) + 6\int_c^\infty f^2(x) dx < \infty.
\]

A similar argument (or replacing $x \mapsto -x$) yields $\operatorname{Var}_{(-\infty,-c]}(g) < \infty$.
The proof is complete.

\end{proof}
We present two examples illustrating our results.
\vspace{0.5cm}

\noindent\textbf{Example 1.} Let $b(x) = \frac{1}{2}a'(x)$, and the coefficient $a$ is given by the formula
\begin{equation*}
    a(x) = C(1+x^2)^\delta, \quad C > 0.
\end{equation*}
Then if $\delta > \frac{1}{2}$, then the condition $a^{-1} \in L^1(\mathbb{R})$ is satisfied, and by Theorem~\ref{th_ApendixB} the operator $\mathrm{L}_0$ is accretive, but not $m$-accretive in the space $L^2(\mathbb{R})$ (the case of a limit circle on $+\infty$). If $\delta \in \left(0, \frac{1}{2}\right]$, then the integral diverges, and by Theorem~\ref{th_MAccretivityMAIN_A} the operator $\mathrm{L}_0$ is $m$-accretive in the space $L^2(\mathbb{R})$.

\vspace{0.5cm}

\noindent\textbf{Example 2.} Let $b(x) = \frac{1}{2}a'(x)$, and the coefficient has a non-power growth:
\begin{equation*}
    a(x) = C\sqrt{1+x^2} \ln^\delta(e+x^2), \quad C>0.
\end{equation*}
Then if $\delta > 1$, the integral $\int_{\mathbb{R}} a^{-1}(x)dx$ converges, so by Theorem~\ref{th_ApendixB} the operator $\mathrm{L}_0$ is accretive, but not $m$-accretive in the space $L^2(\mathbb{R})$. If $\delta \in (0, 1]$, then the corresponding integral diverges, and by Theorem~\ref{th_MAccretivityMAIN_A} the operator $\mathrm{L}_0$ is $m$-accretive in the space $L^2(\mathbb{R})$, since $r=\frac{1}{2i}a$.

These examples confirm that the boundary between accretivity and $m$-accretivity is completely determined by the integrability of $1/m$ at infinity, precisely dictated by Theorem \ref{th_MAccretivityMAIN_A}.
\end{appendices}

\vspace{25pt}

\section*{Acknowledgments}
The work of the first-named author (Vladimir Mikhailets) was funded by the Isaac Newton Institute of Mathematical Sciences "Solidarity Program" and the London Mathematical Society. The author wishes to thank the Department of Mathematics, King's College London, for its  hospitality.

The work of the second-named author (Volodymyr Molyboga) was supported by a grant from the Simons Foundation (SFI-PD-Ukraine-00014586).

The authors are grateful to the reviewers for their valuable remarks, which helped improve the paper.



\end{document}